\documentclass{amsart}
\usepackage{tikz}
\usetikzlibrary{matrix}
\usepackage{amssymb}
\usepackage{latexsym}
\usepackage{enumerate}

\newtheorem{thm}{Theorem}[section]
\newtheorem{defin}[thm]{Definition}
\newtheorem{cor}[thm]{Corollary}
\newtheorem{prop}[thm]{Proposition}
\newtheorem{lemma}[thm]{Lemma}
\newtheorem{rmk}[thm]{Remark}
\newtheorem{fact}[thm]{Fact}

\newcommand{\integers}{\ensuremath{\mathbb{Z}}}
\newcommand{\complex}{\ensuremath{\mathbb{C}}}

\newcommand{\abs}[1]{\ensuremath{\left|#1\right|}}
\newcommand{\defsetshort}[1]{\ensuremath{\left\{#1\right\}}}
\newcommand{\defset}[2]{\defsetshort{#1\,\left|\,#2\right.}}
\newcommand{\defsetspan}[1]{\ensuremath{<\!\!#1\!\!>}}

\newcommand{\defby}{\mathrel{\mathop:}=}

\newcommand{\symgroup}[1]{\ensuremath{\mathfrak{S}_{#1}}}
\newcommand{\altgroup}[1]{\ensuremath{\mathfrak{A}_{#1}}}
\newcommand{\cyclgroup}[1]{\ensuremath{{\boldsymbol{\mu}}_{#1}}}
\newcommand{\glgroup}[2]{\ensuremath{\mathrm{GL}_{#1}\left(#2\right)}}
\newcommand{\slgroup}[3][]{\ensuremath{\mathrm{SL}^{#1}_{#2}\left(#3\right)}}
\newcommand{\pglgroup}[2]{\ensuremath{\mathrm{PGL}_{#1}\left(#2\right)}}
\newcommand{\proj}[2][]{\ensuremath{\mathbb{P}_{#1}^{#2}}}

\newcommand{\deffunname}[3]{\ensuremath{#1:#2\rightarrow#3}}
\newcommand{\defmapname}[3]{\ensuremath{#1:#2\mapsto#3}}

\title{$G$-birational rigidity of the projective plane}
\author[D.\,Sakovics]{Dmitrijs Sakovics}
\address{\emph{Dmitrijs Sakovics}, Center for Geometry and Physics, Institute for Basic Science (IBS), Pohang, 37673, Korea.}
\email{dmitry85@ibs.re.kr}
\thanks{This work has been supported by IBS-R003-D1, Institute for Basic Science in Korea.}

\begin{document}

\begin{abstract}
 Given a surface $S$ and a finite group $G$ of automorphisms of $S$, consider the birational maps $S\dashrightarrow S'$ that commute with the action of $G$. This leads to the notion of a $G$-minimal variety. A natural question arises: for a fixed group $G$, is there a birational $G$-map between two different $G$-minimal surfaces? If no such map exists, the surface is said to be $G$-birationally rigid. This paper determines the $G$-rigidity of the projective plane for every finite subgroup $G\subset\pglgroup{3}{\complex}$.
\end{abstract}
\maketitle

 \section{Introduction}

 Pick a finite group $G$. Let $S$ be a nonsingular projective surface, and take $\rho$ to be a monomorphism from $G$ to the group of automorphisms of $S$. Such a pair $\left(S,\rho\right)$ is called a \emph{$G$-surface}. Given a second $G$-surface $\left(S',\rho'\right)$, one can define a \emph{morphism of $G$-surfaces} to be a morphism \deffunname{f}{S}{S'}, such that $\rho'\left(G\right)=f\circ\rho\left(G\right)\circ f^{-1}$. Usually, the monomorphism $\rho$ is implied by the choice of $S$, and the pair $\left(S,\rho\right)$ will be referred to as simply a $G$-surface $S$ (unless there is some ambiguity). In particular, this paper will mostly deal with the case $S=\proj{2}$ and $G\subset\pglgroup{3}{\complex}=\mbox{Aut}\left(\proj{2}\right)$, making $\rho$ the inclusion map.
 
 A $G$-surface $S$ is said to be \emph{minimal} if any birational $G$-morphism of $S\rightarrow S'$ is an isomorphism of $G$-surfaces. Minimal rational $G$-surfaces have been classified as follows: 
 \begin{thm}[{\cite[Theorem~3.8]{Dolgachev-Iskovskikh09}}]
  Let $S$ be a minimal rational $G$-surface. Then either $S$ admits a structure of a conic bundle with $\mbox{Pic}\left(S\right)^{G}\cong\integers^2$, or $S$ is isomorphic to a Del~Pezzo surface with $\mbox{Pic}\left(S\right)^{G}\cong\integers$.
 \end{thm}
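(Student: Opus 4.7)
The plan is to run the $G$-equivariant minimal model program on $S$. Since $G$ acts by automorphisms, $K_{S}$ is $G$-invariant, and one works with the $G$-invariant part $\overline{NE}\left(S\right)^{G}$ of the Mori cone inside $N_{1}\left(S\right)^{G}\otimes\mathbb{R}$.

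First I would observe that, as $S$ is rational, its Kodaira dimension is $-\infty$, so $K_{S}$ is not pseudo-effective; averaging any $K_{S}$-negative curve class over its $G$-orbit produces a $K_{S}$-negative $G$-invariant class. Consequently $\overline{NE}\left(S\right)^{G}$ contains a $K_{S}$-negative extremal ray $R$, and the $G$-equivariant Contraction Theorem for surfaces yields a $G$-morphism $\phi_{R}\colon S\to X$ with connected fibres that contracts exactly the curves whose classes lie in $R$. According to $\dim X$, the morphism $\phi_{R}$ is of one of three types: (i) $\dim X=2$, with $\phi_{R}$ the blow-down of a $G$-orbit of pairwise disjoint $\left(-1\right)$-curves; (ii) $\dim X=1$, with $\phi_{R}$ a fibration whose general fibre is $\proj{1}$; or (iii) $\dim X=0$, in which case $-K_{S}$ is ample on $S$.

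Next I would use minimality to rule out type (i), since such a blow-down is a non-isomorphic birational $G$-morphism from $S$. In case (iii) the ray $R$ spans all of $\overline{NE}\left(S\right)^{G}$, so $\mbox{Pic}\left(S\right)^{G}\otimes\mathbb{Q}$ is one-dimensional; as $\mbox{Pic}\left(S\right)^{G}\subseteq\mbox{Pic}\left(S\right)$ is torsion-free, this forces $\mbox{Pic}\left(S\right)^{G}\cong\integers$, and $-K_{S}$ ample then makes $S$ a Del Pezzo surface. In case (ii) the base $X$ is a smooth rational curve and $\phi_{R}$ exhibits $S$ as a $G$-conic bundle, the reducible fibres arising as $G$-grouped pairs of transverse $\left(-1\right)$-curves; since the contraction of $R$ has relative $G$-Picard number $1$, one gets $\mbox{rk}\,\mbox{Pic}\left(S\right)^{G}=\mbox{rk}\,\mbox{Pic}\left(X\right)^{G}+1=2$, hence $\mbox{Pic}\left(S\right)^{G}\cong\integers^{2}$.

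The main obstacle is the $G$-equivariant Cone and Contraction package itself: one must justify that a $K_{S}$-negative extremal ray of the invariant Mori cone is generated by an actual effective $G$-invariant curve class, and that the resulting contraction lands in a smooth $G$-variety so the trichotomy above is clean. On a smooth rational surface this ultimately reduces to a careful analysis of $G$-orbits of $\left(-1\right)$-curves and the associated intersection combinatorics, rather than to the abstract higher-dimensional MMP machinery.
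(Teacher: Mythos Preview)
The paper does not give its own proof of this theorem: it is quoted verbatim as \cite[Theorem~3.8]{Dolgachev-Iskovskikh09} and used as background, so there is no in-paper argument to compare against. Your sketch via the $G$-equivariant MMP---average a $K_S$-negative class over $G$, contract a $K_S$-negative extremal ray of $\overline{NE}(S)^G$, and read off the trichotomy by $\dim X$---is exactly the standard route and is essentially how the cited reference proceeds (there the surface case is handled by the $G$-equivariant version of Mori's work, due in this setting to Iskovskikh and Manin). Your identification of the only real subtlety, namely that on a smooth projective surface the contraction of such a ray is either a blow-down of a $G$-orbit of disjoint $(-1)$-curves, a $\mathbb{P}^1$-fibration, or the structure map to a point, is accurate; nothing in your outline is wrong or missing at the level of a sketch.
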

 
 Clearly, it is possible for the same group $G$ to have several different minimal $G$-surfaces. This leads to the $G$-equivariant version of birational rigidity:
 \begin{defin}[\cite{Dolgachev-Iskovskikh09}]
  A $G$-surface $S$ is called \emph{$G$-rigid} if for any birational $G$-map $\Psi:S\dashrightarrow S'$ there exists a birational $G$-automorphism $\alpha:S\dashrightarrow S$, such that $\Psi\circ\alpha$ is a $G$-isomorphism.
 \end{defin}
 
 In the case of $S=\proj{2}$, the group $G$ is a finite subgroup of \pglgroup{3}{\complex}. The list of finite subgroups of \pglgroup{3}{\complex} is well-known, appearing in H.F.~Blischfeldt's 1917 book~\cite{Blichfeldt17}. These groups can be divided into two types: transitive and intransitive. Intransitive groups fix a point on \proj{2}. If a group does not fix any point on \proj{2}, it is said to be transitive. Among transitive groups one can further distinguish the two classes of primitive and imprimitive groups: imprimitive groups have an orbit of size $3$, while primitive ones do not.

 This paper is dedicated to proving the following result:
 \begin{thm}\label{thm:main}
  The projective plane \proj{2} is $G$-rigid if and only if the action of $G$ is transitive and $G$ is not isomorphic to \altgroup{4} or \symgroup{4}.
 \end{thm}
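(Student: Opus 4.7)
The proof splits along the biconditional, with the two directions requiring rather different tools.

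For the ``only if'' direction, I would construct, for each exceptional $G$, an explicit $G$-birational map from $\proj{2}$ to a $G$-minimal surface that is not $G$-isomorphic to $\proj{2}$. When $G$ is intransitive, decomposing the lift of $G$ to $\glgroup{3}{\complex}$ produces a $G$-fixed point $P$ and a complementary $G$-invariant line not through $P$. I would blow up $P$ to obtain the Hirzebruch surface $\mathbb{F}_1$, then perform a $G$-equivariant elementary transformation centered at the smallest $G$-orbit on the exceptional divisor; according to whether $G$ acts reducibly or irreducibly on $T_P\proj{2}$, this yields a $G$-minimal Hirzebruch surface $\mathbb{F}_n$ with $n\geq 2$, which is not $G$-isomorphic to $\proj{2}$ and hence witnesses non-rigidity. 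For $G\cong\altgroup{4}$ or $\symgroup{4}$, I would exhibit a Sarkisov link built around the triangle of $V_4$-fixed points together with a further small $G$-orbit (of size $4$ or $6$), producing a $G$-minimal Del~Pezzo surface that is $G$-birational to $\proj{2}$ but not $G$-isomorphic to it.

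For the ``if'' direction, I would invoke the $G$-equivariant Sarkisov program for surfaces: any $G$-birational map between $G$-minimal surfaces factors as a chain of $G$-equivariant Sarkisov links, and a link starting from $\proj{2}$ begins with the blow-up of a single $G$-orbit of size at most $8$. The plan is a case-by-case traversal: for each remaining transitive $G$---the primitive groups $\altgroup{5}$, $\altgroup{6}$, $\mathrm{PSL}_2(\mathbb{F}_7)$, the Hessian group $H_{216}$ with its subgroups $H_{72}, H_{36}$, and all transitive imprimitive groups other than $\altgroup{4}, \symgroup{4}$---enumerate the $G$-orbits of small size on $\proj{2}$, determine for each whether the resulting blow-up is $G$-minimal of Picard rank~$2$, and verify via Picard rank computation and the $G$-orbit structure on the $(-1)$-curves that every $G$-minimal terminus of every link is in fact $G$-isomorphic to $\proj{2}$.

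The central obstacle is the systematic analysis required for the ``if'' direction. It rests on a detailed understanding of how each finite transitive subgroup of $\pglgroup{3}{\complex}$ acts on both points and on the configurations of $(-1)$-curves produced by intermediate blow-ups. The large automorphism groups of the possible target Del~Pezzo surfaces---for instance $\symgroup{5}$ for degree $5$ and the Weyl group $W(E_6)$ for degree $3$---mean that when comparing $G$-minimal models one must carefully distinguish the given $G$-embedding from competing $G$-embeddings of the same abstract group. The subtle boundary between $G$-isomorphic and $G$-non-isomorphic targets is precisely what singles out $\altgroup{4}$ and $\symgroup{4}$ as exceptional transitive groups.
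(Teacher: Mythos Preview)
Your overall architecture matches the paper's: invoke the equivariant Sarkisov program for the ``if'' direction and exhibit explicit links for the ``only if'' direction. The differences are in execution, and one of them is substantive.

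For the ``only if'' direction, both of your constructions are more elaborate than necessary. When $G$ is intransitive, the paper simply blows up the fixed point: the resulting $\mathbb{F}_1$ already carries a $G$-invariant conic bundle structure with $\mathrm{Pic}(\mathbb{F}_1)^G\cong\integers^2$, so it is a $G$-minimal surface not $G$-isomorphic to $\proj{2}$, and no further elementary transformation to some $\mathbb{F}_n$ is needed. For $\altgroup{4}$ and $\symgroup{4}$, the paper blows up a single orbit of size~$4$ in general position and observes that the resulting degree-$5$ Del~Pezzo surface carries a $G$-conic bundle structure; this is again a Type~I link to a rank-$2$ $G$-minimal model. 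Your proposed link to a $G$-minimal Del~Pezzo of $G$-Picard rank~$1$ would require additional work to build and to verify, and is not what the paper does.

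The more substantive gap is in the ``if'' direction. You propose a case-by-case traversal over all transitive $G$, but the imprimitive ones form an infinite family, so this is not a priori a finite check. The heart of the paper is precisely the reduction you are missing: a classification (its Section~3) of exactly which transitive $\bar G\subset\pglgroup{3}{\complex}$ possess an orbit of size at most~$8$ beyond the distinguished triangle. The outcome is a very short list: among imprimitive groups only $\cyclgroup{3}\rtimes\cyclgroup{3}$, $\altgroup{4}$, $\symgroup{4}$, $\cyclgroup{7}\rtimes\cyclgroup{3}$; among primitive groups only $\altgroup{5}$ and the image of $E_{108}$. In particular $\altgroup{6}$, $\mathrm{PSL}_2(7)$ and the two larger Hessian-type groups have no orbits of size $\le 8$ at all, so there is nothing to analyse for them; and for a generic imprimitive group the only small orbit is the distinguished triangle, whose blow-up yields the standard Cremona involution back to $\proj{2}$. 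Only after this reduction does the link-by-link verification become the short finite computation you envisage. Your phrase ``enumerate the $G$-orbits of small size'' names the right task but gives no indication of how to carry it out uniformly over infinitely many imprimitive groups; that analysis---a careful study of how the diagonal normal subgroup $D\lhd G$ constrains small orbits---is where most of the paper's effort actually lies.
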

  
 The remainder of this article is structured as follows: Section~$2$ will talk about the possible birational $G$-maps that may serve as a counterexample to $G$-rigidity. This will imply that for a \proj{2} not to be $G$-rigid, the action of $G$ must have an orbit of size at most $8$, and Section~$3$ will find the list of groups that have such orbits. After that, Section~$4$ will consider each of these groups and determine if a counterexample to $G$-rigidity can be constructed.

 \section{Elementary links between $G$-surfaces}
 
 To study the $G$-rigidity of \proj{2}, first consider the four special classes of $G$-maps of $G$-surfaces $\chi:S\dashrightarrow S'$ called elementary links (or $G$-links). The $G$-map $\chi$ is called an elementary link of Type~I, II or~IV if it satisfies the respective diagram in Figure~\ref{figure:links}. In all the diagrams, $\pi$ and~$\pi'$ are blowups of $G$-orbits, while $\phi$ and~$\phi'$ are $G$-fibrations. The surface $Z$ is a Del~Pezzo surface, and an elementary link of Type~IV is an exchange of two $G$-equivariant conic bundle structures on $S$. The map $\chi$ is called an elementary link of Type~III if it is the inverse of an elementary link of Type~I.
 
 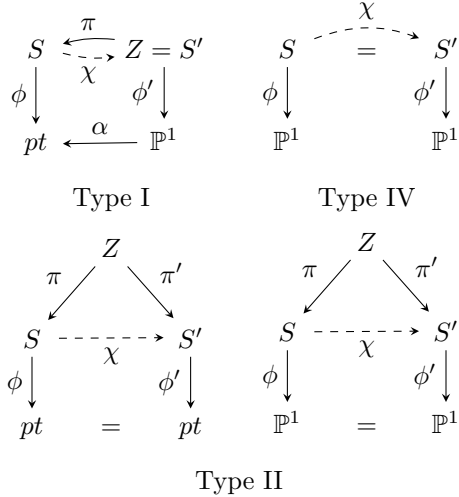
\begin{figure}[htbp]
 \begin{center}
  \begin{tabular}{cc}
  \begin{tikzpicture}
   \matrix (m) [matrix of math nodes,row sep=2em,column sep=1em,minimum width=2em]
   {
     S&&Z=S'\\
     pt&&\proj{1}\\
   };
   \path[-stealth]
    (m-1-3) edge[bend right = 10] node [above] {$\pi$} (m-1-1)
    (m-1-1) edge[dashed, bend right = 10] node [below] {$\chi$} (m-1-3)
    (m-1-1) edge node [left] {$\phi$} (m-2-1)
    (m-1-3) edge node [left] {$\phi'$} (m-2-3)
    (m-2-3) edge node [above] {$\alpha$} (m-2-1)
   ;
  \end{tikzpicture}&
  \begin{tikzpicture}
   \matrix (m) [matrix of math nodes,row sep=2em,column sep=1em,minimum width=2em]
   {
     S&=&S'\\
     \proj{1}&&\proj{1}\\
   };
   \path[-stealth]
    (m-1-1) edge[dashed, bend left = 20] node [above] {$\chi$} (m-1-3)
    (m-1-1) edge node [left] {$\phi$} (m-2-1)
    (m-1-3) edge node [left] {$\phi'$} (m-2-3)
   ;
  \end{tikzpicture}\\
  Type~I&Type~IV\\
  \begin{tikzpicture}
   \matrix (m) [matrix of math nodes,row sep=2em,column sep=1em,minimum width=2em]
   {
     &Z&\\
     S&&S'\\
     pt&=&pt\\
   };
   \path[-stealth]
    (m-1-2) edge node [above left] {$\pi$} (m-2-1)
    (m-1-2) edge node [above right] {$\pi'$} (m-2-3)
    (m-2-1) edge[dashed] node [below] {$\chi$} (m-2-3)
    (m-2-1) edge node  [left] {$\phi$} (m-3-1)
    (m-2-3) edge node [left] {$\phi'$} (m-3-3)
   ;
  \end{tikzpicture}&
  \begin{tikzpicture}
   \matrix (m) [matrix of math nodes,row sep=2em,column sep=1em,minimum width=2em]
   {
     &Z&\\
     S&&S'\\
     \proj{1}&=&\proj{1}\\
   };
   \path[-stealth]
    (m-1-2) edge node [above left] {$\pi$} (m-2-1)
    (m-1-2) edge node [above right] {$\pi'$} (m-2-3)
    (m-2-1) edge[dashed] node [below] {$\chi$} (m-2-3)
    (m-2-1) edge node [left] {$\phi$} (m-3-1)
    (m-2-3) edge node [left] {$\phi'$} (m-3-3)
   ;
  \end{tikzpicture}\\
  \multicolumn{2}{c}{Type~II}\\
  \end{tabular}
  \caption{Elementary links.}\label{figure:links}
 \end{center}
 \end{figure}
 
 Elementary links are important for the study of $G$-rigidity because of the following result:
 \begin{thm}[\cite{Corti95}]
  Let $f:S\dashrightarrow S'$ be a birational $G$-map of minimal $G$-surfaces. Then $f$ is equal to a composition of elementary links.
 \end{thm}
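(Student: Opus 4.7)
The plan is to run the $G$-equivariant Sarkisov program: untwist $f$ one elementary link at a time while tracking a ``Sarkisov degree'' that strictly decreases, until the remaining map is a $G$-isomorphism.

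First, I would fix a $G$-invariant very ample linear system $|H'|$ on $S'$ and let $\mathcal{H}\defby f^{-1}_{*}|H'|$ be its strict transform on $S$. Because $f$ commutes with the $G$-action, $\mathcal{H}$ and its base locus are $G$-invariant, so base points of maximal multiplicity come in full $G$-orbits. Write $\mathcal{H}\sim_{\mathbb{Q}}-\mu K_{S}+(\mbox{vertical contribution})$ with respect to the given Mori fiber structure $\phi$ (with no vertical part in the Del~Pezzo case), and use $\mu$ together with the maximal base-point multiplicity as the Sarkisov degree. The Noether--Fano inequality then says that unless $f$ is already a $G$-isomorphism of Mori fiber spaces, $K_{S}+\frac{1}{\mu}\mathcal{H}$ fails to be nef and there is a $G$-invariant $K$-negative extremal face on $S$ (or on a $G$-equivariant common resolution $W$ of $f$, obtained by successively blowing up $G$-orbits of base points).

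Contracting this face fits into precisely one of the four diagrams of Figure~\ref{figure:links}: a blowup of a $G$-orbit turning a Del~Pezzo into a conic bundle (Type~I) or its inverse (Type~III), a fiberwise move over the conic bundle base (Type~II), or an exchange of two $G$-equivariant conic bundle structures on the same surface (Type~IV). Factoring off this link yields a new birational $G$-map $f_{1}:S_{1}\dashrightarrow S'$ of minimal $G$-surfaces with strictly smaller Sarkisov degree, and iterating terminates after finitely many steps since $\mbox{Pic}(S)^{G}$ has rank at most $2$ on any minimal $G$-surface.

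The chief obstacle is enforcing $G$-equivariance at every step of the MMP: one may only contract $G$-invariant extremal faces, so individual $(-1)$-curves that are not $G$-fixed cannot be contracted on their own, and likewise blowups must be of full $G$-orbits rather than single points. This is exactly what forces the four link types above, and once this care is taken the classical Sarkisov termination argument carries over to the equivariant setting. The most delicate point to verify is that the $K$-negative extremal face produced by Noether--Fano is actually $G$-stable, which follows from the $G$-invariance of $\mathcal{H}$ together with the uniqueness of the face realizing the maximal singularity.
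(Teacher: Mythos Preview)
The paper does not prove this theorem; it is quoted as a known result from the literature (attributed to \cite{Corti95}, though for $G$-surfaces the factorization is essentially due to Iskovskikh and is spelled out in \cite{Dolgachev-Iskovskikh09}). So there is no ``paper's own proof'' to compare against. Your outline is the correct architecture---the $G$-equivariant Sarkisov program with untwisting governed by the Noether--Fano inequality---and would lead to a proof if the details were filled in.

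Two points where your sketch would need tightening. First, the Sarkisov degree is a lexicographically ordered triple (the quasi-effective threshold $\mu$, the number of crepant divisors over the maximal centre, and a canonical threshold), and termination is proved by showing this triple strictly decreases and takes values in a set satisfying the descending chain condition; the rank bound on $\mbox{Pic}(S)^{G}$ is not what drives termination. Second, your description of Type~II as ``a fiberwise move over the conic bundle base'' captures only one of the two Type~II diagrams in Figure~\ref{figure:links}; the other is a Del~Pezzo--to--Del~Pezzo link over a point (blow up a $G$-orbit, then blow down a different $G$-orbit of $(-1)$-curves), and it is this case that carries essentially all of the work in the applications later in the paper. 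Neither of these is a fatal gap in the overall strategy, but both would have to be stated correctly in an actual proof.
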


 This result means that in order to study the $G$-rigidity of \proj{2} (for different groups $G$) one needs to consider all the elementary links (of Types~I and~II) with $S=\proj{2}$. To do this, consider the blowup $\pi$. Since $Z$ is a Del~Pezzo surface, $\pi$ must be a blowup of at most $8$ points in general position. On the other hand, since $\pi$ is a $G$-map, it has to blow up complete $G$-orbits on $S$. Therefore, one needs to determine which groups $G\subset\pglgroup{3}{\complex}$ have orbits of size at most $8$ with the orbit's points being in general position. Recall that a Del~Pezzo surface is a blowup of \proj{2} in up to $8$ points in general position, where:
 \begin{defin}\label{defin:genPos}
  Points $p_1,\ldots,p_n\in\proj{2}$ are said to be \emph{in general position} if
  \begin{itemize}
   \item No three of these points lie on the same line
   \item No six of these points lie on the same conic curve.
   \item No eight of these points lie on a plane cubic which has one of the points as its singular point.
  \end{itemize}
 \end{defin}
 
 The results above immediately imply the following:
 \begin{cor}
  Let $G$ be a group acting on \proj{2}. If all $G$-orbits on \proj{2} have size greater than $8$, then \proj{2} is $G$-birationally rigid and $\mbox{Bir}^G\left(\proj{2}\right)=\mbox{Aut}^G\left(\proj{2}\right)$.
 \end{cor}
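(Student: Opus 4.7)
The plan is to combine Corti's decomposition theorem with the observation that, under the hypothesis of the corollary, no elementary link can have $\proj{2}$ as its source. First I would note that $\proj{2}$ is automatically a minimal $G$-surface (since $\mbox{Pic}(\proj{2})^G\cong\integers$), so Corti's theorem applies: any birational $G$-map $\Psi:\proj{2}\dashrightarrow S'$ to a minimal target $S'$ decomposes into a sequence of elementary links, and if this sequence is non-empty then its first link must have $\proj{2}$ as its source.

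Next I would go through the four link types in Figure~\ref{figure:links} and check which can start at $\proj{2}$. Because $\mbox{Pic}(\proj{2})\cong\integers$, the projective plane admits no $G$-equivariant fibration $\phi:\proj{2}\to\proj{1}$; this immediately rules out Types~II and~IV (whose source fibers over $\proj{1}$) as well as Type~III (whose source is the conic bundle side of an inverted Type~I diagram). That leaves only Type~I, which requires a $G$-equivariant blowup $\pi:Z\to\proj{2}$ of a complete $G$-orbit with $Z$ a Del~Pezzo surface.

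Such a blowup exists only if $G$ admits an orbit on $\proj{2}$ consisting of at most $8$ points in general position in the sense of Definition~\ref{defin:genPos}. Under the hypothesis that every $G$-orbit has size greater than $8$, no such orbit exists, and hence no Type~I link (the only remaining type) can originate at $\proj{2}$. The decomposition of any birational $G$-map $\Psi:\proj{2}\dashrightarrow S'$ to a minimal $S'$ must therefore be empty, so $\Psi$ is already a $G$-isomorphism and (taking $\alpha=\mbox{id}$ in the definition) the $G$-rigidity of $\proj{2}$ follows. Applying the same reasoning to any birational $G$-self-map of $\proj{2}$ also yields $\mbox{Bir}^G(\proj{2})=\mbox{Aut}^G(\proj{2})$.

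I do not foresee a serious obstacle: the corollary is essentially a bookkeeping consequence of the preceding results. The only point requiring minor care is the verification that the source of a Type~III link is necessarily a conic bundle, so that $\proj{2}$ is genuinely excluded from every link type except Type~I.
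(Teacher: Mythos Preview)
Your overall strategy matches the paper's: the corollary is an immediate consequence of Corti's factorisation together with the observation that every elementary link out of $\proj{2}$ begins with a $G$-equivariant blowup of an orbit of size at most $8$.

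There is, however, a genuine slip in your case analysis. You write that Type~II links are ruled out because their ``source fibers over $\proj{1}$''. Look again at Figure~\ref{figure:links}: there are \emph{two} Type~II diagrams, and in the first of them both $\phi:S\to pt$ and $\phi':S'\to pt$ are structure maps to a point, i.e.\ both $S$ and $S'$ are Del~Pezzo surfaces. This variant is precisely the one that occurs for $S=\proj{2}$, and indeed all of Propositions~\ref{prop:link:dist_orb}--\ref{prop:link:A5} later in the paper concern Type~II links from $\proj{2}$. So Type~II is not excluded by the Picard-rank argument; rather, it is excluded under the corollary's hypothesis by the \emph{same} reason as Type~I: the map $\pi:Z\to S$ in the Type~II diagram is a blowup of a $G$-orbit, and $Z$ being Del~Pezzo forces that orbit to have size at most $8$. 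Once you correct this, your argument goes through exactly as the paper intends (the paper itself simply says ``the results above immediately imply the following'' and lists both Types~I and~II as the relevant cases).
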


 \section{Groups acting on \proj{2} with small orbits}
 Let $G\subset\glgroup{3}{\complex}$ be a finite group. Then let $\bar{G}\subset\pglgroup{3}{\complex}$ be the image of $G$ under the natural projection from $\glgroup{3}{\complex}$ onto $\pglgroup{3}{\complex}$. Since this paper studies the action of $\bar{G}$ (rather than $G$), it can be assumed that $G\subset\slgroup{3}{\complex}$. Furthermore, whenever any subgroup $H\subset\slgroup{3}{\complex}$ is used, take $\bar{H}$ to mean its image under this projection to \pglgroup{3}{\complex}.
 
 The action of $G$ not being irreducible is equivalent to the action of $\bar{G}$ being intransitive (i.e.\ having a fixed point on \proj{2}). This case is easy to resolve (see Proposition~\ref{prop:link:orb1}), so for now assume that the action of $G$ is irreducible. In this case, the action is either monomial (i.e.\ the group $\bar{G}$ is transitive but imprimitive) or primitive (i.e.\ the group $\bar{G}$ is primitive). These two cases need to be considered separately.
 
 \subsection{Monomial group actions}
 \subsubsection{Group structure and notation}\label{sect:grpStruct}
  Assume the group $G$ acts monomially. The aim of this section is to classify such groups $G$ (or $\bar{G}$), where $\bar{G}$ has no orbits of size $1$ or $2$, but does have an orbit of size $4$, $5$, $6$, $7$ or $8$. Let $X=(x:y:z)$ be a member of such an orbit. Note that since $G$ is monomial, $\bar{G}$ does have an orbit of size $3$. Call this a \emph{distinguished} orbit, and choose a basis for $\complex^3$ (and, hence, \proj{2}) in which this orbit consists of points $(1:0:0)$, $(0:1:0)$ and $(0:0:1)$.
 Without loss of generality, can assume that $G\subset\slgroup{3}{\complex}$.
 
 Since $G$ is monomial, the action of $G$ permutes the chosen basis, and hence have an exact sequence:
 \[
  0\longrightarrow D\longrightarrow G\longrightarrow\symgroup{3},
 \]
 where $D$ is a subgroup of diagonal matrices (in the chosen basis). The image of $G$ in \symgroup{3} is either \cyclgroup{3} or $\symgroup{3}$. Let $\tau\in G$ be an element, whose image is a $3$-cycle in \symgroup{3}, and let $\sigma\in G$ be an element (if such exists), whose image is a $2$-cycle in $\symgroup{3}$. Let $T$ be the group generated by $D$ and $\tau$. It is clear that either $G=T$ or $G$ is generated by $T$ and $\sigma$.
 
 \begin{prop}
  It is possible to choose a basis as above, in which $\tau$ acts as the matrix \[M_{\tau}=\left(\begin{array}{ccc}0&1&0\\0&0&1\\1&0&0\end{array}\right).\]
 \end{prop}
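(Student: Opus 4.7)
The plan is to exploit the residual freedom in the choice of basis once the distinguished orbit has been placed at $(1:0:0)$, $(0:1:0)$, $(0:0:1)$: one can still permute the three basis vectors or rescale each of them by a nonzero scalar, and either operation preserves the distinguished orbit setwise.

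First I would observe that, because $G$ acts monomially in this basis, $\tau$ is a monomial matrix, and by hypothesis its underlying permutation is a $3$-cycle in $\symgroup{3}$. After conjugating by a permutation matrix, i.e., relabeling the three coordinate axes, I may assume that the underlying permutation of $\tau$ coincides with that of $M_\tau$, so
\[
 \tau=\begin{pmatrix}0&a&0\\0&0&b\\c&0&0\end{pmatrix}
\]
for some nonzero $a,b,c\in\complex$. Since $\tau\in\slgroup{3}{\complex}$ and the underlying $3$-cycle is an even permutation, $\det\tau=abc=1$.

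Next I would conjugate by a diagonal matrix $\Delta=\mathrm{diag}(\lambda_1,\lambda_2,\lambda_3)$. A direct computation shows that the three nonzero entries of $\Delta\tau\Delta^{-1}$ are $a\lambda_1/\lambda_2$, $b\lambda_2/\lambda_3$ and $c\lambda_3/\lambda_1$, and I want each of them to equal $1$. Multiplying the three desired equations gives $abc=1$, which is automatic, so the system is consistent; for instance $\lambda_1=1$, $\lambda_2=a$, $\lambda_3=ab$ realises $\Delta\tau\Delta^{-1}=M_\tau$. Rescaling $\Delta$ by $(\det\Delta)^{-1/3}$ puts it into $\slgroup{3}{\complex}$, so the conjugated group $\Delta G\Delta^{-1}$ still lies in $\slgroup{3}{\complex}$ and still fixes the distinguished orbit pointwise as a set.

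The computation itself is essentially mechanical; the only conceptual point is the matching between the determinant relation $\det\tau=1$ and the cyclic compatibility of the three ratios $\lambda_i/\lambda_{i+1}$, which is exactly what guarantees solvability of the diagonal system. The closest thing to an obstacle is the initial bookkeeping, namely checking that the relabeling of coordinate axes needed to align the cycle shape of $\tau$ with that of $M_\tau$ really lies within the allowed freedom, which it does because the distinguished orbit is a single $G$-orbit of size three.
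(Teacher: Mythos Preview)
Your argument is correct and is essentially the same approach as the paper's: choose basis vectors projecting to the distinguished orbit and use the residual scaling freedom together with $\det\tau=1$ to normalise the three nonzero entries. The paper's proof merely hints at this in one line, while you have written out the diagonal conjugation explicitly.
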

 \begin{proof} Take the basis for $\complex^3$, whose elements project to the distinguished orbit in \proj{2}. Note that the matrix representing $\tau$ needs to have determinant $1$.\end{proof}
 
 From now on, fix this choice of a basis for $\complex^3$.
 Since $D$ is a finite group of diagonal matrices, there exists a number $k\in\integers_{>0}$, such that the diagonal entries of any element of $D$ are (not necessarily primitive) $k$-th roots of unity. Pick the smallest possible such $k$, and fix $\zeta_k$ as a primitive $k$-th root of unity. Since the group $\bar{G}$ does not have an orbit of size $1$, have $k>1$. Denote by $M_k\left(a,b,c\right)$ the diagonal matrix with diagonal entries $\zeta_k^a$, $\zeta_k^b$ and $\zeta_k^c$. Clearly, any element of $D$ can be described via this notation. Furthermore, since $D\subset G\subset\slgroup{3}{\complex}$, any element of $D$ is actually of the form $M_k\left(a,b,-(a+b)\right)$.
 
 In order to classify groups $G$ that give rise to small $\bar{G}$-orbits, we will look at the projective $D$-orbits and then at their extensions to orbits of $T$ and $G$. Since $D\lhd T\lhd G$, the action of $T$ permutes the $D$-orbits and the action of $G$ permutes the $T$-orbits.
 
 \subsubsection{Classification of small $D$-orbits}
 Let $D_1\subseteq D$ be the group generated by all the elements of $D$ of the form $M_k\left(a,b,c\right)$ with $abc=0$. Clearly, $D_1$ is generated by elements $M_k\left(b,0,-b\right)$ and $M_k\left(0,b,-b\right)$, for some $b\in\integers_{>0}$. Without loss of generality, choose $b$ to be minimal such. Clearly, $b|k$. If $D_1=D$, then $b=1$. Otherwise, there exists an element $M_k\left(1,a,-(a+1)\right)\in D\setminus D_1$.
 
 \begin{prop}\label{prop:D:A-type}
  Using the notation above, if $D_1\neq D$, then the pair $(a,b)$ is one of: $(2,7)$, $(4,7)$, $(4,21)$, $(16,21)$. In these cases, $b=k$ and the group $D_1$ is trivial.
 \end{prop}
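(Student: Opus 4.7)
The plan is to exploit the $\tau$-equivariance of $D$: since $\tau$ normalises $D$ and conjugation by $\tau$ sends $M_k(x,y,z)$ to $M_k(y,z,x)$, the group $D$ is invariant under cyclic permutation of diagonal entries, making it naturally a module over $\integers[t]/(t^3-1)$. Combined with the minimality of $k$, the $\tau$-symmetry forces each coordinate projection $D \to \integers/k\integers$ to be surjective (the three images coincide, and if any were a proper subgroup we could shrink $k$). Hence there exists some $v_1 \defby M_k(1, a, -(a+1)) \in D$. When $D_1 \neq D$ the parameter $b$ satisfies $b > 1$, and since $D_1$ consists of elements whose entries are all multiples of $b$, the element $v_1$ automatically lies outside $D_1$.

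The key algebraic step is to apply $\tau$-conjugation to $v_1$: $\tau v_1 \tau^{-1} = M_k(a, -(a+1), 1) \in D$, and the product $v_1^a \cdot (\tau v_1 \tau^{-1})^{-1}$ equals $M_k(0, a^2+a+1, -(a^2+a+1))$. Since this lies in $D$ with first coordinate $0$, the defining property of $b$ yields $b \mid a^2+a+1$. A short counting argument then shows that $D = \langle v_1, M_k(0, b, -b) \rangle$, so $|D| = k^2/b$; its central subgroup of scalar matrices has order $1$ or $3$, since nontrivial central matrices in $\slgroup{3}{\complex}$ are cube roots of unity times the identity. Therefore $|\bar{D}| \geq k^2/(3b)$, and the hypothesis that some $\bar{G}$-orbit has size at most $8$ (which, for a point with all nonzero coordinates, contains a $\bar{D}$-orbit of size $|\bar{D}|$) forces $k^2 \leq 24b$.

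From here I would enumerate. Since $b \mid a^2+a+1$, every prime divisor of $b$ is either $3$ or $\equiv 1 \pmod{3}$, and $b \leq k$ together with $k^2 \leq 24b$ yields $k \leq 24$. Split into three subcases. In~(i), $b = k$: $D$ is cyclic of order $k$ with $k \mid a^2+a+1$, and direct computation leaves only $k \in \{3, 7, 21\}$; the case $k = 3$ is degenerate ($\bar{D}$ trivial), forcing $\bar{G}$ to fix $(1:1:1)$ or produce an orbit of size~$2$ and thus violating the no-orbit-of-size-$1$-or-$2$ hypothesis, whereas for $k = 7$ solving $a^2+a+1 \equiv 0 \pmod{7}$ yields $a \in \{2, 4\}$ and for $k = 21$ it yields $a \in \{4, 16\}$, producing the four listed pairs. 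In~(ii), $1 < b < k$: in most numerical instances the inequality $|\bar{D}| > 8$ contradicts the orbit bound directly, while in the few residual small cases (such as $k = 6$, $b = 3$, $a = 1$) one checks by hand that $(1:1:1)$ becomes a $\bar{G}$-fixed point. In~(iii), $b = 1$, we have $D_1 = D$, excluded by assumption. The main obstacle is subcase~(ii): the divisibility constraints and the bound $k^2 \leq 24b$ alone do not eliminate intermediate~$b$, so each residual numerical possibility must be individually examined against both the orbit-size bound and the no-small-orbit hypothesis, exploiting in the borderline cases the interplay between $\bar{D}$ and~$\tau$ to exhibit an unavoidable fixed point or orbit of size~$2$.
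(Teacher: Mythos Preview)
Your strategy shares the paper's opening move exactly: conjugate $v_1 = M_k(1,a,-(a+1))$ by $\tau$ and form $v_1^a\cdot(\tau v_1\tau^{-1})^{-1}$ to extract the congruence $b \mid a^2+a+1$. The divergence is in the bounding step. You compute $|D| = k^2/b$ globally and argue that a point with all three coordinates nonzero has $\bar{D}$-orbit of full size $|\bar{D}| \geq k^2/(3b)$, whence $k^2 \leq 24b$; then you enumerate over $k$. The paper instead works locally with the coset index $l$ (the least $l>0$ with $g_1^l(X) \in D_1\cdot X$): from just two nonzero coordinates it derives $(a-1)l \equiv 0 \pmod b$, and multiplying $a^2+a+1\equiv 0$ by $l$ and using $al\equiv l$ gives $3l \equiv 0 \pmod b$. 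Since $l \leq 8$ this yields $b \leq 24$ directly, after which the enumeration is over pairs $(a,b)$; the conclusion $b=k$ comes last, from $l=7$ in each surviving case together with the lower bound $lk/b$ on the orbit size when $k>b$.

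Your version has a real gap: the inequality $|\bar{D}| \leq 8$ only follows if the small-orbit point $X$ has \emph{all three} coordinates nonzero, and nothing in the hypotheses guarantees this --- the size-$4$-to-$8$ orbit could sit on the three coordinate lines $L_i$, where the $\bar{D}$-stabiliser may strictly contain the scalars and the free-action claim fails. The paper's argument avoids this by requiring only two nonzero coordinates (automatic for any $X$ outside the distinguished orbit, after relabelling), so the coset-index computation covers the on-line and off-line cases uniformly. This is also why your subcase~(ii) with $1<b<k$ remains a promise rather than a proof: the paper's route --- first isolate the four $(a,b)$ pairs from $b\leq 24$, observe $l=7$ in each, then kill $k>b$ via $lk/b\geq 14$ --- disposes of that regime cleanly, whereas your residual case-checks would still need the missing on-line analysis to be airtight.
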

 \begin{proof}
   Assume $D_1\neq D$, and let $g_1=M_k\left(1,a,-(a+1)\right)\in D$. Since $D\lhd T$, can conjugate $g_1$ by the action of $\tau$ to obtain $g_2=M_k\left(a,-(a+1),1\right)\in D$. Take:
   \[
    g_1^ag_2^{-1}=M_k\left(a-a,a^2+a+1,-a(a+1)-1\right).
   \]
   This is an element of $D_1$, so $a^2+a+1=0\mod{b}$ (but not necessarily modulo $k$).

   Now one needs to find an upper bound for the value of $b$. To do this, consider the action of $g_1$ on $X=\left(x:y:z\right)$. Assume that $X$ is not in the distingushed orbit, i.e.\ at least two of $x,y,z$ are non-zero. Without loss of generality, assume $x,y\neq0$. Let $l$ be the smallest positive integer, such that $g_1^l\left(X\right)\in D_1\left(X\right)$. Then \[g_1^l\left(X\right)=\left(\zeta_k^lx:\zeta_k^{al}y:\zeta_k^{-l(a+a)}z\right)=\left(\zeta_k^{bt_1}x:\zeta_k^{bt_2}y:\zeta_k^{bt_3}z\right)\]
   for some $t_1,t_2,t_3$. Since $x,y\neq0$, for some $\lambda\in\complex^*$ have
   \[
    \zeta_k^l=\lambda\zeta_k^{bt_1},\ \zeta_k^{al}=\lambda\zeta_k^{bt_2}
   \]
   Thus, since $b|k$, have $al=l\mod{b}$.
   
   Combining the last two results, have $3l=0\mod{b}$. Since we are only interested in orbits of size up to $8$, have $0<l\leq8$, and so $0<b\leq24$. Since $0<a\leq b$ and $a^2+a+1=0\mod{b}$, this only leaves the four possible pairs $\left(a,b\right)$ above. In all of these cases, $l=7$. Note that if $k>b$, then the $D$-orbit of $X$ has size at least $lk/b\geq2l$. Since the orbit of $X$ needs to have size at most $8$, have $b=k$. 
 \end{proof}

 \begin{fact}
  If $D_1=D$, then $\abs{D}=k^2$. Thus, the $D$-orbit of any point has size at most $k^2$.
 \end{fact}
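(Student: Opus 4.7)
The plan is to exploit the observation (already recorded in the setup) that $D_1=D$ forces $b=1$, reducing $D$ to the group generated by two explicit commuting elements of order $k$. The fact then follows by a direct count of that abelian group.

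Concretely, I would proceed as follows. First, use $b=1$ to write $D=\langle g_1,g_2\rangle$, where $g_1=M_k(1,0,-1)$ and $g_2=M_k(0,1,-1)$. Second, observe that these two diagonal matrices commute and each has order exactly $k$ (since $\zeta_k$ is a primitive $k$-th root of unity), so the map
\[
\varphi:\integers/k\times\integers/k\longrightarrow D,\qquad (i,j)\mapsto g_1^ig_2^j=M_k\left(i,j,-(i+j)\right),
\]
is a surjective group homomorphism onto $D$. Third, check injectivity: $\varphi(i,j)=I$ forces $\zeta_k^i=\zeta_k^j=1$, so $i\equiv j\equiv 0\pmod{k}$. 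This yields $D\cong(\integers/k)^2$ and hence $\abs{D}=k^2$. The orbit-size bound is then immediate from the orbit--stabilizer theorem, since every $D$-orbit has size dividing $\abs{D}=k^2$.

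There is essentially no obstacle here once $b=1$ is in hand: everything reduces to a routine computation of the order of $(\integers/k)^2$. The one place where one might want to add justification is the step $b=1$ itself. It follows from the minimality of $k$ in the construction: if $D_1=D$, then every diagonal entry of every element of $D$ is a power of $\zeta_k^b$, hence lies in $\cyclgroup{k/b}$ (using $b\,\vert\,k$), and minimality then forces $k/b=k$, i.e.\ $b=1$.
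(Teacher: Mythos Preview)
Your argument is correct. The paper states this as a \emph{Fact} with no accompanying proof, treating it as immediate from the setup (where it has already recorded that $D_1=D$ implies $b=1$, and that $D_1$ is generated by $M_k(b,0,-b)$ and $M_k(0,b,-b)$); your write-up simply makes explicit the isomorphism $D\cong(\integers/k)^2$ that the paper leaves implicit, so the approaches coincide.
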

 
 By definition, the action of $D$ preserves the distinguihed orbit of size $3$ (forming the chosen basis). Therefore, it also preserves the set of three lines going through pairs of points of that orbit. Call these lines~$L_1$, $L_2$ and~$L_3$.
 
 \begin{prop}\label{prop:D:lines}
  If $D_1=D$, then, given a point $X$ on one of the lines $L_i$, either $X$ lies in the distinguished orbit or $X$ has a $D$-orbit of size $k$.
 \end{prop}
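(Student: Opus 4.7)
The plan is to use the explicit description of $D$ under the hypothesis $D_1=D$ and then compute the stabilizer of a point on $L_i$ directly. First I would record that, since $D_1=D$, the parameter $b$ from the previous discussion equals $1$, so $D$ is generated by $M_k(1,0,-1)$ and $M_k(0,1,-1)$, giving
\[
 D=\defsetshort{M_k(a,b,-(a+b))\,|\,a,b\in\integers/k\integers},\qquad\abs{D}=k^2.
\]

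Next I would invoke the symmetry between the three lines: $\tau$ permutes $L_1,L_2,L_3$ cyclically and normalises $D$, so it suffices to check the statement for a single line, say $L_1=\defsetshort{(x:y:0)}$. If $X\in L_1$ is not in the distinguished orbit, then both coordinates $x,y$ are nonzero, and after rescaling we may take $X=(1:\mu:0)$ for some $\mu\in\complex^*$.

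The key step is the stabilizer computation. Applying a general element of $D$ gives
\[
 M_k(a,b,-(a+b))\cdot X=\left(\zeta_k^a:\zeta_k^b\mu:0\right),
\]
which represents $X$ projectively iff $\zeta_k^{b-a}=1$, i.e.\ iff $a\equiv b\pmod{k}$. Hence the stabilizer of $X$ in $D$ is $\defsetshort{M_k(a,a,-2a)\,|\,a\in\integers/k\integers}$, a subgroup of order $k$. By the orbit--stabilizer theorem, the $D$-orbit of $X$ has size $\abs{D}/k=k^2/k=k$, as required.

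There is no real obstacle here once the structure of $D$ is unpacked; the computation is routine and the symmetry argument reduces three cases to one.
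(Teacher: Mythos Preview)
Your proposal is correct and is precisely the ``direct computation'' the paper alludes to: under $D_1=D$ one has $D=\{M_k(a,b,-(a+b))\}$ of order $k^2$, and the stabilizer of a point $(1:\mu:0)$ with $\mu\neq0$ on a coordinate line is the order-$k$ subgroup $\{M_k(a,a,-2a)\}$, whence the orbit has size $k$. The symmetry reduction via $\tau$ is a clean way to handle all three lines at once.
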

 \begin{proof} By direct computation. \end{proof}
 
 \begin{prop}
  Let $D_1=D$ and let $X$ be a point not contained in the lines $L_i$. If the orbit of $X$ has size less than $k^2$, then $3|k$ and the orbit of $X$ has size at least $k$.
 \end{prop}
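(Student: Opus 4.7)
The plan is to compute $\mathrm{Stab}_D(X)$ directly and then apply the orbit-stabilizer theorem, using the preceding Fact that $|D|=k^2$ when $D_1=D$. Since $X$ avoids the three lines $L_i$, which in the chosen basis are exactly the coordinate lines joining pairs of distinguished basis points, all three projective coordinates of $X=(x:y:z)$ are nonzero. Consequently, a diagonal element $M_k(a,b,-(a+b))$ fixes $X$ in $\proj{2}$ precisely when there is a common scalar $\lambda\in\complex^*$ satisfying $\zeta_k^a=\zeta_k^b=\zeta_k^{-(a+b)}=\lambda$, which is a clean system of congruences modulo $k$ in $a$ and $b$.

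Solving the system reduces at once to the two conditions $a\equiv b\pmod k$ and $3a\equiv 0\pmod k$, so $\mathrm{Stab}_D(X)$ is parametrized by $\defset{a\in\integers/k}{3a\equiv 0\pmod k}$ (with $b=a$) and has order $\gcd(3,k)$. Hence the stabilizer is trivial when $3\nmid k$ and cyclic of order $3$ when $3\mid k$. By orbit-stabilizer the $D$-orbit of $X$ therefore has size either $k^2$ or $k^2/3$. An orbit of size strictly less than $k^2$ thus forces $3\mid k$, and in that case $k\geq 3$ gives orbit size $k^2/3\geq k$, yielding both conclusions of the proposition.

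No real obstacle is anticipated: the argument is a direct stabilizer computation whose only essential input is that none of the coordinates of $X$ vanishes, which is exactly what the hypothesis $X\notin L_i$ provides. The only mild point of care is to work projectively (allowing the common scalar $\lambda$) rather than linearly, but this does not complicate the resulting congruence system.
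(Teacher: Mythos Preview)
Your proof is correct and follows essentially the same approach as the paper: both arguments reduce to the congruences $a\equiv b\pmod{k}$ and $3a\equiv 0\pmod{k}$ coming from a diagonal element fixing $X$ projectively. Your presentation is slightly cleaner in that you package this as a stabilizer computation and invoke orbit--stabilizer to get the exact orbit size $k^2/\gcd(3,k)$, whereas the paper argues the two conclusions (orbit $\geq k$, and $3\mid k$) separately.
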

 \begin{proof}
   The group $D$ is generated by elements $g_1=M_k\left(1,0,-1\right)$ and $g_2=M_k\left(0,1,-1\right)$. Since $X$ does not lie on the lines $L_i$, have $X=\left(x:y:z\right)$, where $xyz\neq0$. It is easy to see that $g_i^t\left(X\right)=X$ if and only if $t=0\mod{k}$ (hence the orbit of $X$ has at least $k$ elements). Therefore, need $g_1^{t_1}\left(X\right)=g_2^{t^2}\left(X\right)$ for some $t_1$ and $t_2$ not both zero modulo $k$. This gives
   \[
    \zeta_k^{t_1}x=\lambda x,\ y=\lambda\zeta_k^{t_2}y,\ \zeta_k^{-t_1}z=\lambda\zeta_k^{-t_2}z
   \]
   for some $\lambda\neq0$. Thus
   \[
    t_1=-t_2\mod{k},\ 3t_1=0\mod{k},
   \]
   and $3|k$.
 \end{proof}
 
 \begin{cor}
  In the proposition above, if we also assume that the size of the $D$-orbit is at most $8$, then $k=3$ and $\bar{D}$ does indeed have an orbit of size $3$. 
 \end{cor}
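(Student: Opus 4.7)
The plan is to combine the previous proposition with the size-at-most-$8$ hypothesis and then sharpen the numerics by reading the exact orbit size off the proof of the proposition. First, the proposition already yields $3\mid k$ together with $\abs{D\cdot X}\geq k$; coupled with the new assumption $\abs{D\cdot X}\leq 8$, this forces $k\leq 8$. Since $k>1$ and $3\mid k$, only $k\in\{3,6\}$ remains.

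The main step is to eliminate $k=6$. For this I would revisit the stabilizer computation inside the proof of the previous proposition: a diagonal element $M_k(a,b,-(a+b))\in D$ fixes $(x:y:z)$ with $xyz\neq 0$ projectively if and only if $a\equiv b\equiv -(a+b)\pmod{k}$, i.e.\ $a=b$ and $3a\equiv 0\pmod{k}$. Whenever $3\mid k$ this yields exactly three such elements, so by orbit--stabilizer $\abs{D\cdot X}=\abs{D}/3=k^2/3$. Plugging in the two candidates, $k=6$ produces $\abs{D\cdot X}=12>8$, contradicting the hypothesis, while $k=3$ gives $\abs{D\cdot X}=3$.

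Therefore $k=3$, and the same count shows that the $D$-orbit of $X$ itself consists of exactly three points, supplying the claimed $\bar{D}$-orbit of size $3$ (which is genuinely new, as the three points of the distinguished basis sit in singleton $\bar{D}$-orbits). The only subtle point I anticipate is recognising that one needs the \emph{equality} $\abs{D\cdot X}=k^2/3$, not merely the inequality $\abs{D\cdot X}\geq k$ recorded in the statement of the proposition; this equality is however implicit in that proposition's proof, so no additional calculation beyond the arithmetic above is required.
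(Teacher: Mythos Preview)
Your proof is correct and follows essentially the same route as the paper: reduce to $k\in\{3,6\}$ via the proposition's conclusions $3\mid k$ and $\abs{D\cdot X}\geq k$, then eliminate $k=6$ by computing the orbit size explicitly. The paper's own proof simply asserts that the case $k=6$ ``can be checked explicitly, showing that no suitable orbits exist''; your orbit--stabilizer computation (noting that the projective stabilizer of a point off the $L_i$ consists exactly of the three scalar matrices $M_k(a,a,a)$ with $3a\equiv 0$, whence $\abs{D\cdot X}=k^2/3$) is precisely that explicit check, carried out rather than deferred.
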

 \begin{proof}
   If $k=3$, the $\bar{D}$-orbits of generic points do indeed have size $3$. Since the orbit must have size less than $8$, it only remains to check the case $k=6$. This can be checked explicitly, showing that no suitable orbits exist.
 \end{proof}

 To summarise this section, we have the following result:
 \begin{lemma}\label{lemma:D-orb}
  Let $D$ be a group constructed at the start of this and let $X\in\proj{2}$ be a point with the projective $\bar{D}$-orbit of $X$ having size at most $8$. Then one of the following cases occur:
  \begin{enumerate}
   \item\label{D-orb:dist} $X$ is a point in the distinguished orbit, i.e.\ $X$ is one of $(1:0:0)$, $(0:1:0)$, $(0:0:1)$. The $\bar{D}$-orbit of $X$ has size $1$.
   \item\label{D-orb:lines} $X$ is not one of the three points above, but is contained in the lines $L_1,L_2,L_3$ connecting them. Then the $\bar{D}$-orbit of $X$ has size $k$ and is contained in one of the three lines.
   \item\label{D-orb:gen3} $k=3$ and $X$ has a $\bar{D}$-orbit of size $3$. Here, $X$ can be taken to be any point not contained in the three lines $L_i$ described above.
   \item\label{D-orb:gen4} $k=2$ and $X$ has a $\bar{D}$-orbit of size $4$. Here, $X$ can be taken to be any point not contained in the three lines $L_i$ described above.
   \item\label{D-orb:special} The group $D$ is one of the four groups described in Proposition~\ref{prop:D:A-type}.
  \end{enumerate}
 \end{lemma}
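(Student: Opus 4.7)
The plan is to package the results proved earlier in this subsection into a single case analysis, indexed by (a) whether $D_1=D$, and (b) where $X$ sits with respect to the three distinguished basis points and the lines $L_1,L_2,L_3$ joining them. Nothing new is really needed; the work is to see that the preceding propositions cover every possibility.

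First, if $D_1\neq D$, I would invoke Proposition~\ref{prop:D:A-type} directly, landing in case~(\ref{D-orb:special}). So assume $D_1=D$, and split on the position of $X$. If $X$ is one of $(1{:}0{:}0), (0{:}1{:}0), (0{:}0{:}1)$, then it is fixed by $\bar D$, giving case~(\ref{D-orb:dist}). If $X$ lies on some $L_i$ but is not a basis point, Proposition~\ref{prop:D:lines} yields case~(\ref{D-orb:lines}). In the remaining situation all coordinates of $X$ are non-zero, so the Fact and the final proposition and corollary of this subsection apply: either the $\bar D$-orbit of $X$ has full size $k^2$, or $3\mid k$ and the orbit has size at least $k$.

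It remains to match these two alternatives against the bound $\abs{\bar D\cdot X}\leq 8$. If the orbit has size $k^2$, then $k^2\leq 8$; combined with $k>1$ this forces $k=2$ and an orbit of size $4$, which is case~(\ref{D-orb:gen4}). If the orbit has size strictly less than $k^2$, then $3\mid k$ and the orbit has size at least $k$, so $k\in\{3,6\}$; the corollary handles $k=6$ explicitly, leaving only $k=3$ with an orbit of size $3$, which is case~(\ref{D-orb:gen3}).

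The argument is essentially bookkeeping, and no single step is a genuine obstacle. The least automatic part is the $k=2$ branch yielding case~(\ref{D-orb:gen4}): it is not named explicitly in the preceding propositions, but it follows from a direct computation with $D=\langle M_2(1,0,-1),M_2(0,1,-1)\rangle$ acting on $X=(x:y:z)$ with $xyz\neq 0$, showing that the stabiliser is trivial and so the orbit has size $\abs{D}=4$.
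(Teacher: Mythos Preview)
Your proposal is correct and matches the paper's approach exactly: the paper's own proof is the single line ``Immediate from the results in this section,'' and your write-up is precisely the explicit unpacking of that sentence into the case split on $D_1=D$ versus $D_1\neq D$ and on the position of $X$ relative to the distinguished points and lines. Your handling of the $k=2$ branch is fine and in fact already falls out of Proposition~\ref{prop:D:A-type}'s contrapositive (since $3\nmid 2$, the orbit cannot be smaller than $k^2=4$), so no separate computation is strictly needed there either.
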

 \begin{proof} Immediate from the results in this section. \end{proof}
 
 \subsubsection{Building $\bar{T}$-orbits}
 In the notation of Section~\ref{sect:grpStruct}, consider the extension of the group $D$ to the group $T\subseteq G$. This is done by adding the generator $\tau$, whose action must act on the space of $\bar{D}$-orbits. To make things more clear, it is worth to consider some groups separately before assembling the final result:

 First, one should consider the groups described in Proposition~\ref{prop:D:A-type}. For clarity, let $D_{(a,k)}$ be the group generated by $g_{(a,k)}=M_k\left(1,a,-(a+1)\right)$ (where, as in Proposition~\ref{prop:D:A-type}, $\left(a,k\right)$ is one of $(2,7)$, $(4,7)$, $(4,21)$, $(16,21)$). Let $T_{(a,k)}$ be the group generated by $D_{(a,k)}$ and $\tau$. Let $W$ be the scalar matrix with all diagonal entries equal $\zeta_3$.
 \begin{prop}\label{prop:T:A-type:conjugacy}
  In the notation above, the following hold:
  \begin{itemize}
   \item $T_{(16.21)}=\defsetspan{T_{(2,7)},W}$. Hence $\bar{T}_{(2,7)}=\bar{T}_{(16,21)}$.
   \item $T_{(4.21)}=\defsetspan{T_{(4,7)},W}$. Hence $\bar{T}_{(4,7)}=\bar{T}_{(4,21)}$.
   \item $T_{(2,7)}$ and $T_{(4,7)}$ (respectively, $T_{(4,21)}$ and $T_{(16,21)}$) are conjugate in \slgroup{3}{\complex} (and hence in \pglgroup{3}{\complex}
  \end{itemize}
 \end{prop}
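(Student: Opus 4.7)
The plan is to settle all three bullets by a direct calculation, based on the Chinese remainder splitting $\integers/21\cong\integers/7\times\integers/3$ applied to the diagonal entries.

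For the first bullet, I would compute powers of $g_{(16,21)}=M_{21}(1,16,-17)$. Reducing the exponents modulo~$7$ (using $16\equiv 2$ and $-17\equiv -3$) gives $g_{(16,21)}^{3}=M_{7}(1,2,-3)=g_{(2,7)}$, while reducing modulo~$3$ (using $16\equiv 1$ and $-17\equiv 1$) gives $g_{(16,21)}^{7}=M_{3}(1,1,1)=W$. Since $g_{(16,21)}$ has order~$21$ and $g_{(2,7)},W$ have coprime orders $7$ and $3$, the cyclic group $\defsetspan{g_{(16,21)}}$ coincides with $\defsetspan{g_{(2,7)},W}$. Appending the common generator $\tau$ yields $T_{(16,21)}=\defsetspan{T_{(2,7)},W}$, and passing to $\pglgroup{3}{\complex}$ kills the scalar $W$, giving $\bar T_{(2,7)}=\bar T_{(16,21)}$. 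The same argument, word for word, handles the second bullet via $g_{(4,21)}^{3}=g_{(4,7)}$ and $g_{(4,21)}^{7}=W$.

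For the third bullet, note that $g_{(4,7)}=M_{7}(1,4,2)$ is obtained from $g_{(2,7)}=M_{7}(1,2,4)$ by swapping the last two diagonal entries. I would therefore take $P$ to be the matrix that swaps $e_{2}$ and $e_{3}$, rescaled by $-1$ so that $\det P=1$. A direct matrix computation then verifies $Pg_{(2,7)}P^{-1}=g_{(4,7)}$ and $P\tau P^{-1}=\tau^{-1}\in T_{(4,7)}$, so $PT_{(2,7)}P^{-1}=T_{(4,7)}$. The parenthetical conjugacy $T_{(4,21)}\sim T_{(16,21)}$ then comes for free: the scalar $W$ commutes with $P$, so conjugating $T_{(16,21)}=\defsetspan{T_{(2,7)},W}$ by $P$ yields $\defsetspan{T_{(4,7)},W}=T_{(4,21)}$.

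The only mildly delicate point is choosing a coherent primitive root $\zeta_{21}$ with $\zeta_{21}^{3}=\zeta_{7}$ and $\zeta_{21}^{7}=\zeta_{3}$; this is harmless, since every assertion in the proposition is invariant under the choice of primitive $21$st root. Beyond that bookkeeping, no genuine obstacle arises — the proposition is essentially a diagonal-character computation.
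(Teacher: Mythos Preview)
Your argument is correct and matches the paper's proof essentially line for line: the paper also records $g_{(16,21)}^{3}=g_{(2,7)}$, $g_{(4,21)}^{3}=g_{(4,7)}$, $g_{(4,21)}^{7}=g_{(16,21)}^{7}=W$ and concludes by an element count, then exhibits the very same conjugating matrix $-\left(\begin{smallmatrix}1&0&0\\0&0&1\\0&1&0\end{smallmatrix}\right)$. Your write-up simply makes the Chinese remainder reasoning and the verification $P\tau P^{-1}=\tau^{-1}$ explicit where the paper leaves them as ``direct computation''.
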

 \begin{proof}
  By direct computation, have 
  \[
   g_{(4,21)}^3=g_{(4,7)},\ g_{(16,21)}^3=g_{(2,7)},\mbox{ and }g_{(4.21)}^7=g_{(16,21)}^7=W
  \]
  Counting the numbers of elements in the relevant groups, get the equalities in the proposition's statement. Since the image of $W$ in \pglgroup{3}{\complex} is the identity, the pairs of groups produce the same projective actions. The conjugacies mentioned can be achieved via the element
  \[
   -\left(\begin{array}{ccc}
                 1&0&0\\
                 0&0&1\\
                 0&1&0
                \end{array}
\right)\in\slgroup{3}{\complex}.
  \]
 \end{proof}

 \begin{prop}\label{prop:T:A-type}
  Let $T_{(a,k)}$ be one of the four groups described above. Let $X$ be a point contained in a $\bar{T}$-orbit of size at most $8$. Assume further that $X$ is not one of $(1:0:0)$, $(0:1:0)$, $(0:0:1)$. Then the orbit of $X$ has size $7$ and it contains the point $\left(1:\zeta_3^c:\zeta_3^{2c}\right)$ for some $c\in\integers$. These are representatives of $3$ distinct orbits of size $7$, and the points of each of these orbits are in general position.
 \end{prop}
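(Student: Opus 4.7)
The plan is to first reduce to $(a,k)=(2,7)$: by Proposition~\ref{prop:T:A-type:conjugacy} each of the four projective groups $\bar T_{(a,k)}$ either equals $\bar T_{(2,7)}$ or is $\pglgroup{3}{\complex}$-conjugate to it, so $\bar D$ is cyclic of order $7$ and $\bar T/\bar D\cong\cyclgroup{3}$ is generated by the image of $\tau$. For any $X$ outside the distinguished orbit the $\bar D$-orbit of $X$ has size exactly $7$: off the three lines $L_i$ this is already contained in the proof of Proposition~\ref{prop:D:A-type} (which yields $l=7$), while on some $L_i$ it is a direct verification. Since $\bar T/\bar D$ has order $3$, the $\bar T$-orbit has size $7$ or $21$, so the bound $\leq 8$ forces size $7$ together with $\tau(X)\in\bar D(X)$. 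As $\tau$ cyclically permutes $L_1,L_2,L_3$, a point on any $L_i$ has $\bar T$-orbit of size $21$, so one may assume $xyz\neq 0$.

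The key step is to solve $\tau(X)=g^{t}(X)$ in $\proj{2}$. Writing this as $(y:z:x)=(\zeta_7^{t}x:\zeta_7^{2t}y:\zeta_7^{-3t}z)$ and introducing the projective factor $\lambda$, multiplying the three resulting scalar equations gives $\lambda^{3}=1$, hence $\lambda=\zeta_3^{c}$ for some $c\in\{0,1,2\}$. Substituting back yields $X=(1:\zeta_3^{c}\zeta_7^{t}:\zeta_3^{2c}\zeta_7^{3t})=g^{t}(1:\zeta_3^{c}:\zeta_3^{2c})$, so the orbit contains the claimed base point. Each $(1:\zeta_3^{c}:\zeta_3^{2c})$ is manifestly $\tau$-invariant, so its $\bar T$-orbit equals its $\bar D$-orbit and has size $7$. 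The three orbits ($c=0,1,2$) are pairwise disjoint because the $c$-orbit lies on the irreducible cuspidal cubic $y^{3}=\zeta_3^{c}x^{2}z$, and these three cubics meet only at the coordinate points $(1:0:0)$ and $(0:0:1)$, which are outside the orbits.

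For general position I will use the cuspidal-cubic parametrisation $t\mapsto(1:t:\zeta_3^{2c}t^{3})$, with the orbit corresponding to $t=\zeta_3^{c}\zeta_7^{s}$. A line $Ax+By+Cz=0$ pulls back to the cubic $C\zeta_3^{2c}t^{3}+Bt+A=0$, which has no $t^{2}$ term; so three collinear orbit points would yield three distinct $7$-th roots of unity summing to zero, contradicting the fact that $\sum_{s=0}^{6}\zeta_7^{s}=0$ is the only $\mathbb{Q}$-linear vanishing sum of $7$-th roots of unity (as $[\mathbb{Q}(\zeta_7):\mathbb{Q}]=6$). A conic pulls back to a sextic in $t$ with no $t^{5}$ term, so six orbit points on a conic would similarly require six $7$-th roots of unity to sum to zero, again contradicting this identity. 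The eight-points clause of Definition~\ref{defin:genPos} is vacuous for a seven-point orbit. The main obstacle is the bookkeeping in paragraph two, namely extracting the $\zeta_3$-parameter $c$ cleanly from $\tau(X)=g^{t}(X)$; once the cuspidal-cubic parametrisation is in place, the general-position verification reduces to the uniqueness of vanishing sums of $7$-th roots of unity.
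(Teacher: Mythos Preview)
Your argument is correct and follows the paper's approach in the main reduction: both proofs observe that $\bar D$ is cyclic of order $7$, force $\tau$ to preserve the $\bar D$-orbit, and solve $\tau(X)=g^{t}(X)$ to extract the parameter $c\in\{0,1,2\}$ and the base point $(1:\zeta_3^{c}:\zeta_3^{2c})$. Where you differ is in the back end. The paper simply records that the three size-$7$ orbits are ``distinguished by the value of $c$'' and that general position is ``easy to check by direct computation'', whereas you give conceptual reasons: the $c$-orbit lies on the cuspidal cubic $y^{3}=\zeta_3^{c}x^{2}z$, these cubics meet only at coordinate points (disjointness), and under the parametrisation $t\mapsto(1:t:\zeta_3^{2c}t^{3})$ the pullback of a line (respectively a conic) misses the monomial $t^{2}$ (respectively $t^{5}$), so three (respectively six) collinear/conconic orbit points would force a vanishing sum of fewer than seven distinct $7$th roots of unity, which is impossible since the unique $\{0,1\}$-relation among $\zeta_7^{0},\ldots,\zeta_7^{6}$ is the full sum. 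This is a genuinely nicer argument than brute computation; the only small points to keep tidy are that you implicitly use $xyz\neq 0$ when concluding $\lambda^{3}=1$, and in the conic case you should note that a reducible conic through six orbit points would already force three collinear points, so one may assume the pulled-back sextic is nonzero of degree exactly $6$ before invoking Vieta.
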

 \begin{proof}
  Consider the generator $g_{(a,k)}\in D_{(a,k)}$. It is easy to check directly that for each of the groups mentioned here, $D_{(a,k)}\lhd T_{(a,k)}$. Therefore, the group $\bar{T}_{(a,k)}$ has size $7\cdot3$. Since the size of the $\bar{T}_{(a,k)}$-orbit needs to be at most $8$, the action of $\tau$ needs to preserve the $\bar{D}_{(a,k)}$-orbit. To do this, need to have $\tau\left(X\right)=g_{(a,k)}^t\left(X\right)$ for some $t\in\integers$. Setting $X=\left(x:y:z\right)\in\proj{2}$, this implies
  \[
   y=\lambda\zeta_k^tx,\ z=\lambda\zeta_k^{at}y,\ x=\lambda\zeta_k^{-t(a+1)}z
  \]
  for some $\lambda\in\complex^{*}$. Therefore, $\lambda=\zeta_3^c$ (some $c\in\integers$) and $X=\left(1:\zeta_3^c\zeta_k^{t}:\zeta_3^{2c}\zeta_k^{t(a+1)}\right)$.
  
  Considering the groups $T_{(2,7)}$ and $T_{(4,7)}$, one can see that in each case such points form exactly $3$ orbits of size $7$, distinguished by the value of $c$.
  
  Since in each of the cases, the orbits are known explicitly, it is easy to check by direct computation that the seven points of any given orbit lie in general position.
  
  Since the groups $T_{(16,21)}$ and $T_{(4,21)}$ have the same projective actions as the groups $T_{(2,7)}$ and $T_{(4,7)}$ (respectively), the same conclusions hold for these two groups.
 \end{proof}

 \begin{lemma}\label{lemma:T-orb}
  Let $X\in\proj{2}$ be a point, whose $\bar{T}$-orbit has size at most $8$. Then one of the following cases occurs:
  \begin{enumerate}
   \item\label{T-orb:dist} The $\bar{T}$-orbit of $X$ consists of the three points $(1:0:0)$, $(0:1:0)$, $(0:0:1)$.
   \item\label{T-orb:6pt} $k=2$ and $X$ has a $\bar{T}$-orbit of $6$ points, two on each of the lines $L_i$ defined in Case~\ref{D-orb:lines} of Lemma~\ref{lemma:D-orb}.
   \item\label{T-orb:4pt} $k=2$, $X$ has a $\bar{T}$-orbit of size $4$ and $X$ is of the form \[X=\left(1:(-1)^a\lambda:(-1)^b\lambda^2\right)\] with $a,b\in\integers$ and $\lambda^3=1$. The three different orbits can be distinguished by the corresponding value of $\lambda$. The points of each of these orbits lie in general position.
   \item\label{T-orb:3pt} $k=3$ and $X$ has a $\bar{T}$-orbit of size $3$ and $X$ is of the form \[X=\left(1:\zeta_3^a:\zeta_3^b\right)\] with $a,b\in\integers$. There are exactly $3$ such orbits, with points of each of them being in general position.
   \item\label{T-orb:spec} The group $D$ is one of the four groups described in Proposition~\ref{prop:D:A-type} and $X$ has an orbit of size $7$. The points of this orbit are described in Proposition~\ref{prop:T:A-type}.
  \end{enumerate}
  In cases~\ref{T-orb:6pt}, \ref{T-orb:4pt}, \ref{T-orb:3pt} and~\ref{T-orb:spec}, the group $\bar{T}$ is uniquely defined up to \pglgroup{3}{\complex}-conjugation.
 \end{lemma}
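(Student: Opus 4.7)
The plan is to refine the $\bar{D}$-orbit classification of Lemma~\ref{lemma:D-orb} into a $\bar{T}$-orbit classification by analysing the action of $\tau$. Since $D\lhd T$ and the image of $\tau$ in $T/D$ has order $3$, the automorphism $\tau$ permutes $\bar{D}$-orbits, either fixing a given orbit or sending it into a $3$-cycle of orbits; consequently any $\bar{T}$-orbit is either a single $\tau$-invariant $\bar{D}$-orbit or a disjoint union of three $\bar{D}$-orbits of equal size, and so the bound $\abs{\bar{T}\cdot X}\le 8$ translates into a constraint on the $\bar{D}$-orbit of $X$ that can be checked case by case.

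Walking through the five cases of Lemma~\ref{lemma:D-orb}: case~\ref{D-orb:dist} gives case~\ref{T-orb:dist} immediately, as $\tau$ cycles the three basis points. In case~\ref{D-orb:lines} the $\bar{D}$-orbit lies on a single line $L_i$, and since $\tau$ permutes $L_1,L_2,L_3$ cyclically it cannot fix such an orbit; hence $\abs{\bar{T}\cdot X}=3k$, and the bound forces $k=2$, yielding case~\ref{T-orb:6pt}. In cases~\ref{D-orb:gen3} and~\ref{D-orb:gen4} the $\bar{D}$-orbit already has size $3$ or $4$, so a non-fixing $\tau$-action would enlarge it to $9$ or $12$; hence $\tau$ must fix the orbit, producing cases~\ref{T-orb:3pt} and~\ref{T-orb:4pt} respectively. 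Case~\ref{D-orb:special} is covered directly by Proposition~\ref{prop:T:A-type} and yields case~\ref{T-orb:spec}.

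The principal computational step, which I expect to be the main obstacle, is the explicit coordinate description of $X$ in cases~\ref{T-orb:3pt} and~\ref{T-orb:4pt}. Writing $X=(1:u:v)$ with $uv\neq 0$ and imposing $\tau(X)=(u:v:1)\sim M_k(a,b,-(a+b))\cdot X$ for some integers $a,b$, one clears the projective scalar to obtain two multiplicative relations on $u$ and $v$. For $k=3$ these yield $u^3=1$ with $v$ an arbitrary cube root of unity, giving $X=(1:\zeta_3^a:\zeta_3^b)$. For $k=2$ they yield $v=\pm u^2$ together with $u^6=1$, which rearranges into the claimed form $X=\bigl(1:(-1)^a\lambda:(-1)^b\lambda^2\bigr)$ with $\lambda^3=1$. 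A short check then partitions the resulting point sets into exactly three $\bar{T}$-orbits each, and a direct coordinate computation verifies the general position claim.

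For the uniqueness of $\bar{T}$ up to $\pglgroup{3}{\complex}$-conjugation in cases~\ref{T-orb:6pt}--\ref{T-orb:spec}: in cases~\ref{T-orb:6pt}, \ref{T-orb:4pt} and~\ref{T-orb:3pt} the parameter $k$ is pinned down ($k=2$ in the first two, $k=3$ in the third); the $\tau$-invariance of $D$ combined with the minimality of $k$ then forces $D$ to contain, and hence to equal, the full diagonal subgroup of $\slgroup{3}{\complex}$ with $k$-th root of unity entries (of order $k^2$), so that $T=\defsetspan{D,M_\tau}$ is determined. In case~\ref{T-orb:spec}, Proposition~\ref{prop:T:A-type:conjugacy} collapses the four candidate groups $T_{(a,k)}$ into a single $\bar{T}$ up to $\pglgroup{3}{\complex}$-conjugation.
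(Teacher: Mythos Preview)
Your proposal is correct and follows essentially the same approach as the paper: both arguments use $D\lhd T$ to decompose $\bar{T}$-orbits as single $\bar{D}$-orbits or triples thereof, and then walk through the five cases of Lemma~\ref{lemma:D-orb} with the same case-by-case reasoning. Your write-up is in fact somewhat more explicit than the paper's in two places---the coordinate computation pinning down $X$ in cases~\ref{T-orb:3pt} and~\ref{T-orb:4pt} (which the paper dismisses as ``an easy calculation''), and the uniqueness argument (where you spell out that $D$ must be the full $k$-th-root diagonal group, while the paper simply notes the groups are given by explicit matrices)---but the underlying logic is identical.
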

 \begin{proof}
  This extends the results of Lemma~\ref{lemma:D-orb}. Since $D\lhd T$, any $\bar{T}$-orbit is either a $\bar{D}$-orbit or a union of three of them. To distinguish between the two situations, consider the cases from Lemma~\ref{lemma:D-orb} one at a time.
  \begin{itemize}
   \item In Case~\ref{D-orb:dist}, the three points are clearly permuted by the action of $\tau$, producing the distinguished orbit.
   \item The action of $\tau$ also permutes the three lines $L_1,L_2,L_3$ mentioned in Case~\ref{D-orb:lines}. Therefore, triples of $\bar{D}$-orbits of size $k$ mentioned in this case are combined into $\bar{T}$-orbits of size $3k$. Since the total size of a $\bar{T}$-orbit needs to be at most $8$, need to have $k=2$, producing a family of orbits of size $6$. Clearly, no three of the points of any given orbit lie on the same line. However, one can see by direct computation that the whole orbit is contained in a conic if and only if the orbit contains the point $\left(0:1:a\right)$ with $a^6=-1$. It is clear that there are exactly $3$ such orbits, with all other orbits having their points in general position.
   \item In Cases~\ref{D-orb:gen3} and~\ref{D-orb:gen4}, the $\bar{D}$-orbit of $X$ needs to be preserved by the action of $\tau$ (otherwise the $\bar{T}$-orbit has to have size $9$ or $12$ resp.). The required values of $X$ can be found by an easy calculation. Once the orbits are explicitly known, it is easy to check that each of them has its points in general position.
   \item The statement of Case~\ref{T-orb:spec} is discussed in full in Proposition~\ref{prop:T:A-type}.
  \end{itemize}
  Since the groups in cases~\ref{T-orb:6pt}, \ref{T-orb:4pt}, \ref{T-orb:3pt} and~\ref{T-orb:spec} are constructed as explicit matrices (in a chosen basis), they are all defined uniquely up to \pglgroup{3}{\complex}-conjugation.
 \end{proof}
 \begin{rmk}\label{rmk:A4:geometry}
  The group obtained in cases~\ref{T-orb:6pt} and~\ref{T-orb:4pt} of Lemma~\ref{lemma:T-orb} should be discussed a bit further. From the explicit description in the above results, it can be seen that this group $T$ is isomorphic to the alternating group \altgroup{4}, acting on $\complex^3$ as its irreducible $3$-dimensional representation (and projecting to \pglgroup{3}{\complex} isomorphically). For clarity, work in the basis for $\complex^3$ chosen above.
  
  As discussed in the lemma, this group has $3$ orbits of size $4$. Call them $O_1$, $O_2$ and~$O_3$ with $\left(1:1:1\right)\in O_1$, $\left(1:\zeta_3:\zeta_3^2\right)\in O_2$, $\left(1:\zeta_3^2:\zeta_3\right)\in O_3$. It is also known that this group preserves three conics, which can be called $C_1$, $C_2$ and~$C_3$. One needs to note that although the orbits $O_i$ each have their four points in general position, have (up to renaming the conics):
  \[\begin{array}{l}
   O_2\cup O_3\subset C_1=\defset{\left(x:y:z\right)\in\proj{2}}{x^2+y^2+z^2=0}\\
   O_1\cup O_3\subset C_2=\defset{\left(x:y:z\right)\in\proj{2}}{\zeta_3x^2-(1+\zeta_3)y^2+z^2=0}\\
   O_1\cup O_2\subset C_3=\defset{\left(x:y:z\right)\in\proj{2}}{\zeta_3x^2+y^2-(1+\zeta_3)z^2=0}.\\
  \end{array}\]
  Furthermore, the orbits of size $6$ whose points are not in general position also arise from these conics: the group action preserves the union of lines $L_1\cup L_2\cup L_3$, and these orbits appear as intersections of these three lines with one of these conics. In particular:
  \[\begin{array}{l}
   (0:1:\zeta_{12}^3)\in C_1\cap\left(L_1\cup L_2\cup L_3\right)\\
   (0:1:\zeta_{12})\in C_2\cap\left(L_1\cup L_2\cup L_3\right)\\
   (0:1:\zeta_{12}^5)\in C_3\cap\left(L_1\cup L_2\cup L_3\right).\\
  \end{array}\]
 \end{rmk}
 
 \subsubsection{Building $\bar{G}$-orbits}
 As discussed above, the group $G$ is either equal to $T$ or is generated by $T$ and an additional element \[\sigma=\left(\begin{array}{ccc}\alpha&0&0\\0&0&\alpha\beta\\0&\alpha\gamma&0\end{array}\right).\]
 
 \begin{prop}
  In the notation above, $\alpha^k=\beta^k=\gamma^k=1$. Furthermore, if $k$ is odd then $G=T$.
 \end{prop}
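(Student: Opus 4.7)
The plan is to extract constraints on $\alpha$, $\beta$, $\gamma$ by identifying products of $\sigma$ with elements of $T$ that project to the identity in $\symgroup{3}$; any such product lies in $D$ and so its diagonal entries must all be $k$-th roots of unity.

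Two products suffice. Since $\sigma$ projects to a transposition, $\sigma^2\in D$; and since the image of $\sigma$ in $\symgroup{3}$ conjugates the image of $\tau$ to $\tau^{-1}$ (conjugation of a $3$-cycle by a transposition inverts it), we also have $\sigma\tau\sigma^{-1}\tau\in D$. A direct matrix computation in the chosen basis gives
$\sigma^2=\mathrm{diag}(\alpha^2,\alpha^2\beta\gamma,\alpha^2\beta\gamma)$ and $\sigma\tau\sigma^{-1}\tau=\mathrm{diag}(\gamma^{-1},\beta,\beta^{-1}\gamma)$.
The second equality immediately yields $\beta^k=\gamma^k=1$, while the first gives $\alpha^{2k}=1$.

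The $\slgroup{3}{\complex}$ condition $\det\sigma=-\alpha^3\beta\gamma=1$, i.e.\ $\alpha^3\beta\gamma=-1$, when raised to the $k$-th power and combined with $\beta^k=\gamma^k=1$, produces $\alpha^{3k}=(-1)^k$; together with $\alpha^{2k}=1$ this gives $\alpha^k=(-1)^k$. For $k$ even, this immediately gives $\alpha^k=1$, proving the first assertion. For $k$ odd we instead obtain $\alpha^k=-1\ne1$; since this is incompatible with the first assertion, no such $\sigma\in G\setminus T$ can exist when $k$ is odd, forcing $G=T$ and establishing the second assertion. The main delicate step is the routine but fiddly monomial matrix multiplication needed to evaluate $\sigma\tau\sigma^{-1}\tau$; everything else is straightforward algebra with roots of unity.
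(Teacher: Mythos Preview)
Your computations are correct, and your choice of the element $\sigma\tau\sigma^{-1}\tau$ is a clean way to obtain $\beta^k=\gamma^k=1$ directly. The problem lies in the final step. Having derived $\alpha^k=(-1)^k$, you argue for odd $k$ that $\alpha^k=-1$ is ``incompatible with the first assertion'' and conclude that no such $\sigma$ exists. But the first assertion is part of what you are proving; you have not established $\alpha^k=1$ independently for odd $k$, so invoking it here is circular. The constraints you have actually extracted are only $\alpha^{2k}=1$, $\beta^k=\gamma^k=1$, and $\alpha^3\beta\gamma=-1$, and these by themselves do not exclude the possibility that $k$ is odd with $\alpha^k=-1$.

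In fact the gap cannot be repaired, because the statement as written is too strong. Take $k=3$, $D=\langle M_3(1,0,2),\,M_3(0,1,2)\rangle$, and
$\sigma=-\left(\begin{smallmatrix}1&0&0\\0&0&1\\0&1&0\end{smallmatrix}\right)$,
so that $\alpha=-1$ and $\beta=\gamma=1$. Then $\sigma\in\slgroup{3}{\complex}$, $\sigma^2=I\in D$, $\sigma\tau\sigma^{-1}=\tau^{-1}$, and $\sigma$ normalises $D$; hence $G=\langle D,\tau,\sigma\rangle$ is a finite irreducible monomial subgroup of \slgroup{3}{\complex} whose diagonal part is exactly $D$, with $k=3$ odd but $G\neq T$ and $\alpha^k=-1\neq1$. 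The paper's own argument asserts that membership of $(\tau\sigma)^2$, $(\tau^2\sigma)^2$, $(\sigma\tau\sigma)^3$ in $D$ already forces $\alpha^k=1$, but computing these products shows they only give $\alpha^{2k}=1$ together with $\beta^k=\gamma^k=1$; the passage to $\alpha^k=1$ is unjustified there as well, and the example above shows it is false in general.
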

 \begin{proof}
   By construction, $\sigma^2\in D$, so $\alpha^{2k}=1$. Since $\left(\tau\sigma\right)^2,\left(\tau^2\sigma\right)^2,\left(\sigma\tau\sigma\right)^3\in D$, get $\alpha^k=\beta^k=\gamma^k=1$. Since $\sigma\in G\subset\slgroup{3}{\complex}$, have $\alpha^3\beta\gamma=-1$. This implies that $k$ must be even for such a $\sigma$ to exist.
 \end{proof}

 \begin{lemma}\label{lemma:G-orb}
  Assume $G\neq T$, i.e.\ $G$ is generated by $T$ and the element $\sigma$ above (for some values of $\alpha,\beta,\gamma$). Let $X\in\proj{2}$ be a point, whose $\bar{G}$-orbit has size at most $8$.
  Then one of the following cases occurs:
  \begin{enumerate}
   \item The $\bar{G}$-orbit of $X$ consists of the three points $(1:0:0)$, $(0:1:0)$, $(0:0:1)$.
   \item The $\bar{T}$-orbit and the $\bar{G}$-orbit of $X$ match and consist of $6$ points. There are exactly two $\bar{G}$-orbit of size $6$. The points of one of them lie in general position, and the points of the other one lie on a conic.
   \item The $\bar{T}$-orbit and the $\bar{G}$-orbit of $X$ match and consist of $4$ points in general position. There is exactly one $\bar{G}$-orbit of size $4$.
   \item The $\bar{G}$-orbit of $X$ consists of $8$ points lying on a single conic. There is exactly one $\bar{G}$-orbit of size $8$.
  \end{enumerate}
 \end{lemma}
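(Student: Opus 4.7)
The plan is to extend Lemma~\ref{lemma:T-orb} by tracking how $\bar{\sigma}$ acts on the $\bar{T}$-orbits listed there. Since $T \lhd G$ with $[G:T] = 2$, every $\bar{G}$-orbit is either a single $\bar{T}$-orbit preserved by $\bar{\sigma}$, or the disjoint union of two $\bar{T}$-orbits swapped by $\bar{\sigma}$. In particular, a $\bar{T}$-orbit that fits inside a $\bar{G}$-orbit of size at most $8$ has size at most $8$ (or at most $4$ if it is paired with another). By the preceding proposition, $G \neq T$ forces $k$ to be even, which eliminates Cases~\ref{T-orb:3pt} and~\ref{T-orb:spec} of Lemma~\ref{lemma:T-orb}; only the distinguished orbit (Case~\ref{T-orb:dist}) and the two $k=2$ families remain. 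The first case of the current lemma then follows immediately, since $\sigma$ permutes the coordinate axes.

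For the size-$6$ family of Case~\ref{T-orb:6pt}, I would parametrise orbits by a point $(0:1:a)\in L_1$ with $a\in\complex^*$. Any two distinct size-$6$ orbits swapped by $\bar{\sigma}$ would produce a $\bar{G}$-orbit of size $12$, so only $\bar{\sigma}$-preserved orbits contribute. A short computation gives $\sigma(0:1:a) \sim (0:1:\gamma/(\beta a))$; combined with $\beta,\gamma\in\{\pm 1\}$, the preservation condition reduces to $a^2\in\{\pm 1\}$, singling out exactly two orbits, namely those through $(0:1:1)$ and $(0:1:i)$. By Remark~\ref{rmk:A4:geometry}, the first lies in general position while the second lies on the invariant conic $C_1$.

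The delicate step is Case~\ref{T-orb:4pt}, where I must determine the involution induced by $\bar{\sigma}$ on $\{O_1, O_2, O_3\}$. A direct evaluation gives $\sigma(1:1:1) \sim (1:\beta:\gamma) \in O_1$, so $\bar{\sigma}$ always preserves $O_1$. For $(1:\zeta_3:\zeta_3^2) \in O_2$, the image $\sigma(1:\zeta_3:\zeta_3^2) \sim (1:\beta\zeta_3^2:\gamma\zeta_3)$ cannot lie in $O_1$ or $O_2$, because matching it against any point of either orbit would require a factor of $\zeta_3$ to be absorbed by the $\pm 1$-ratios among $\sigma$'s entries. Hence $\bar{\sigma}$ must swap $O_2 \leftrightarrow O_3$, producing one $\bar{G}$-orbit of size $4$ (namely $O_1$, in general position) and one of size $8$ (namely $O_2 \cup O_3$, which lies on $C_1$ by Remark~\ref{rmk:A4:geometry}). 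The main obstacle is precisely the uniqueness of this induced involution: confirming it is forced for every admissible choice of $(\alpha,\beta,\gamma)$ rather than only a special one, the rigidity stemming from $\sigma$ having no entries involving $\zeta_3$.
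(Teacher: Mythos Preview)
Your proposal is correct and follows essentially the same route as the paper: reduce to $k$ even via the preceding proposition, discard Cases~\ref{T-orb:3pt} and~\ref{T-orb:spec} of Lemma~\ref{lemma:T-orb}, and then track the action of $\bar\sigma$ on the surviving $\bar T$-orbits using Remark~\ref{rmk:A4:geometry}. The only stylistic difference is that the paper first multiplies $\sigma$ by an element of $D$ to normalise $(\alpha,\beta,\gamma)=(-1,1,1)$, thereby identifying $\bar G\cong\symgroup{4}$ once and for all, and then reads off the orbit behaviour from that single explicit group; you instead carry general $\beta,\gamma\in\{\pm1\}$ through the computations and verify that the outcome is independent of the choice. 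Both arguments yield the same two $\sigma$-stable size-$6$ orbits (through $(0{:}1{:}1)$ and $(0{:}1{:}i)=(0{:}1{:}\zeta_{12}^3)$) and the same $O_1$-fixed, $O_2\leftrightarrow O_3$-swap pattern on the size-$4$ orbits.
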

 \begin{proof}
  The $\bar{G}$-orbit of a point $X\in\proj{2}$ is equal to either a single $\bar{T}$-orbit or a union of exactly two $\bar{T}$-orbits (of the same size). Since the possible $\bar{T}$-orbits have been described in Lemma~\ref{lemma:T-orb}, it remains to see when these can be preserved by the action of $\sigma$. Furthermore, since $T\neq G$, $k$ must be even and the only possible $\bar{T}$-orbits are those in Cases~\ref{T-orb:dist}, \ref{T-orb:6pt} and~\ref{T-orb:4pt} of Lemma~\ref{lemma:T-orb}.
  
  Since the distinguished orbit in Case~\ref{T-orb:dist} is always preserved, one only needs to consider the case $k=2$. Here, must have $\alpha,\beta,\gamma=\pm1$. By multiplying $\sigma$ by elements of $D$, can assume that $\alpha=-1$, $\beta=\gamma=1$. This means that there exists (up to \slgroup{3}{\complex}-conjugation) exactly one such group $G$ containing the proper subgroup $T\cong\altgroup{4}$. It can be seen that this group is equal to the symmetric group \symgroup{4}. 
  
  Referring to the notation in Remark~\ref{rmk:A4:geometry}, the action of $\sigma$ preserves the conic $C_1$ and the orbit $O_1$. The $\bar{T}$-orbits $O_2$ and $O_3$ get combined into a single $\bar{G}$-orbit of size $8$ (contained in $C_1$), and the conics $C_2$ and $C_3$ get interchanged by $\sigma$. The $\bar{T}$-orbit of size $6$ given by $C_1\cap\left(L_1\cup L_2\cup L_3\right)$ is preserved. By a simple computation, one can see that the only other $\bar{T}$-orbit of size $6$ preserved by $\sigma$ is the one containing the point $\left(0:1:1\right)$ --- the orbit's points are in general position.
 \end{proof}
 
 \begin{lemma}\label{lemma:sl3:mono:conclusion}
  Let $\bar{G}\subset\pglgroup{3}{\complex}$ be a finite subgroup whose action on \proj{2} has an orbit \defsetshort{e_1,e_2,e_3} of size $3$, but no orbits of size $1$ or $2$. Assume that $\bar{G}$ has a further orbit of size at most $8$. Then one of the following holds:
  \begin{itemize}
   \item $\bar{G}\cong\cyclgroup{3}\rtimes\cyclgroup{3}$ and it has three additional orbits of size $3$ (with none of the orbits contained in a line). Up to \pglgroup{3}{\complex}-conjugation, such a group is unique.
   \item $\bar{G}\cong \cyclgroup{2}^2\rtimes\cyclgroup{3}\cong\altgroup{4}$. The group has no other orbits of size $3$, three orbits of size $4$ and an infinite family of orbits of size $6$. The points of the size~$4$ orbits lie in general position. There are three orbits of size $6$ with all $6$ points lying on a single conic. The other size $6$ orbits have their points in general position. Up to \pglgroup{3}{\complex}-conjugation, such a group is unique.
   \item $\bar{G}\cong \cyclgroup{2}^2\rtimes\symgroup{3}\cong\symgroup{4}$. The group has no other orbits of size $3$, one orbit of size $4$, two orbits of size $6$ and one orbit of size $8$. The points of the orbit of size $4$ and one of the orbits of size $6$ are in general position. The points of the other two orbits (one of size $6$ and one of size $8$) all lie on a single conic curve. Up to \pglgroup{3}{\complex}-conjugation, such a group is unique.
   \item $\bar{G}\cong\cyclgroup{7}\rtimes\cyclgroup{3}$. The group has  no other orbits of size $3$ and three orbits of size $7$, each of them having its points in general position. Up to \pglgroup{3}{\complex}-conjugation, such a group is unique.
  \end{itemize}

 \end{lemma}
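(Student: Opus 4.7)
The plan is to assemble the lemma directly from Lemmas~\ref{lemma:T-orb} and~\ref{lemma:G-orb} by enumerating the cases produced there and, in each, reading off the abstract isomorphism type of $\bar{G}$ together with the full list of its small orbits and their geometry.

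First I would split on whether $G = T$ or $G \neq T$. If $G = T$, then Lemma~\ref{lemma:T-orb} gives three genuinely distinct families to consider (Cases~\ref{T-orb:3pt}, \ref{T-orb:4pt}--\ref{T-orb:6pt}, and~\ref{T-orb:spec}), and the explicit matrix generators from Section~\ref{sect:grpStruct} make the abstract type of $\bar{T}$ transparent: for $k = 3$ one has $\bar{D} \cong \cyclgroup{3}$ and hence $\bar{T} \cong \cyclgroup{3} \rtimes \cyclgroup{3}$; for $k = 2$ one has $\bar{D} \cong \cyclgroup{2}^{2}$, so $\bar{T} \cong \altgroup{4}$; and in the exceptional case Proposition~\ref{prop:T:A-type:conjugacy} collapses the four groups $T_{(a,k)}$ to a single projective action $\bar{T} \cong \cyclgroup{7} \rtimes \cyclgroup{3}$. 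If instead $G \neq T$, the proposition just before Lemma~\ref{lemma:G-orb} forces $k$ to be even, so only the $\bar{T} \cong \altgroup{4}$ situation is relevant; Lemma~\ref{lemma:G-orb} then says that adjoining a suitable $\sigma$ produces a unique extension up to $\pglgroup{3}{\complex}$-conjugation, which must be $\symgroup{4}$ since $[\bar{G} : \bar{T}] = 2$ and $\bar{G}$ acts on the distinguished orbit as the full $\symgroup{3}$.

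From there the four bullet points can be read off. The orbit counts of sizes $4$, $6$, $7$, $8$ transplant directly from Lemmas~\ref{lemma:T-orb} and~\ref{lemma:G-orb}, as does the distinction between ``points in general position'' and ``points on a conic''. The absence of additional orbits of size~$3$ (beyond the distinguished one) in the $\altgroup{4}$, $\symgroup{4}$ and $\cyclgroup{7}\rtimes\cyclgroup{3}$ cases is immediate from Lemma~\ref{lemma:D-orb}, since a projective $\bar{D}$-orbit of size~$3$ only arises when $k = 3$ (whereas these three groups have $k = 2$ or $k = 7$). Uniqueness up to $\pglgroup{3}{\complex}$-conjugation is already asserted in the two preceding lemmas and follows from the explicit matrix normal forms fixed at the start of Section~\ref{sect:grpStruct}.

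The main obstacle, and the step I would check most carefully, is the $\symgroup{4}$ case: one must verify from the explicit form of $\sigma$ that exactly one of the $\bar{T}$-orbits of size~$6$ (the intersection of $C_{1}$ with $L_{1}\cup L_{2}\cup L_{3}$, in the notation of Remark~\ref{rmk:A4:geometry}) is preserved by $\sigma$ and lies on a conic, that exactly one further $\bar{T}$-orbit of size~$6$ is preserved by $\sigma$ with its points in general position, and that the two size~$4$ orbits $O_{2}$ and $O_{3}$ fuse into a single $\bar{G}$-orbit of size~$8$ lying on $C_{1}$. Each verification is a direct computation with $\sigma$ and the listed points, but counting the surviving orbits correctly, and ruling out any further orbit of size $\leq 8$, is where the bookkeeping is most delicate.
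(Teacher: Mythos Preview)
Your proposal is correct and takes essentially the same approach as the paper, which simply says the lemma is ``immediate from Lemma~\ref{lemma:T-orb} and Lemma~\ref{lemma:G-orb}.'' You have spelled out in detail the case split and the bookkeeping that the paper leaves implicit, including the identification of the abstract isomorphism types and the verification for~$\symgroup{4}$; all of this is exactly what underlies the one-line proof in the paper.
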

 \begin{proof} Immediate from Lemma~\ref{lemma:T-orb} and Lemma~\ref{lemma:G-orb}. \end{proof}

 \subsection{Primitive group actions}
 
 Now consider finite subgroups of \slgroup{3}{\complex} that act primitively. Since, up to \slgroup{2}{\complex}-conjugation, there are only finitely many of them, the relevant computations can be made using their explicit representations. For this, a computer algebra program (GAP, see~\cite{GAP}) has been used to speed up the computations. Of course, it is possible to repeat these computations by hand.
 
 The classification of primitive subgroups is a very old result, which can be found in~\cite{Blichfeldt17}. However, it is more convenient to use its more modern version: 
 \begin{thm}[see~{\cite[Theorem~A]{Yau-Yu93}}]\label{thm:sl3:classification}
Define the following matrices:
\[\begin{array}{lll}
S=\left(\begin{array}{ccc}1&0&0\\0&\omega&0\\0&0&\omega^{2}\end{array}\right)&
T=\left(\begin{array}{ccc}0&1&0\\0&0&1\\1&0&0\end{array}\right)&
W=\left(\begin{array}{ccc}\omega&0&0\\0&\omega&0\\0&0&\omega\end{array}
\right)\\
&&\\
U=\left(\begin{array}{ccc}\epsilon&0&0\\0&\epsilon&0\\0&0&\epsilon\omega
\end{array}\right)&
Q=\left(\begin{array}{ccc}a&0&0\\0&0&b\\0&c&0\end{array}\right)&
V=\frac{1}{\sqrt{-3}}\left(\begin{array}{ccc}1&1&1\\1&\omega&\omega^{2}
\\1&\omega^{2}&\omega\end{array}\right)\\
\end{array}\]
where $\omega=e^{2\pi i/3}$, $\epsilon^{3}=\omega^{2}$ and
$a,b,c\in\complex$ are chosen arbitrarily, as long as $abc=-1$ and
$Q$ generates a finite group.

Up to conjugation, any finite subgroup of \slgroup{3}{\complex} belongs to one of
the following types:
\begin{enumerate}[(A)]
 \item\label{gptype:A} Diagonal abelian group.
 \item\label{gptype:B} Group isomorphic to an irreducible finite subgroups of
\glgroup{2}{\complex} and not conjugate to a group of type~(\ref{gptype:A}).
 \item\label{gptype:C} Group generated by the group in (\ref{gptype:A}) and $T$
and not conjugate to a group of type~(\ref{gptype:A}) or (\ref{gptype:B}).
 \item\label{gptype:D} Group generated by the group in (\ref{gptype:C}) and $Q$
and not conjugate to a group of types~(\ref{gptype:A})---(\ref{gptype:C}).
 \item\label{gptype:E} Group $E_{108}$ of size $108$ generated by $S$, $T$ and $V$.
 \item\label{gptype:F} Group $F_{216}$ of size $216$ generated by $E_{108}$ and an element $P\defby UVU^{-1}$.
 \item\label{gptype:G} Hessian group $H_{648}$ of size $648$ generated by $F_{216}$ and $U$.
 \item\label{gptype:H} Simple group of size $60$ isomorphic to alternating
group \altgroup{5}.
 \item\label{gptype:I} Simple group of size $168$ isomorphic to permutation
group generated by $\left(1234567\right)$, $\left(142\right)\left(356\right)$,
$\left(12\right)\left(35\right)$.
 \item\label{gptype:J} Group of size $180$ generated by the group in
(\ref{gptype:H}) and $W$.
 \item\label{gptype:K} Group of size $504$ generated by the group in
(\ref{gptype:I}) and $W$.
 \item\label{gptype:L} Group $G$ of size $1080$ with $\bar{G}=G/\defsetspan{W}\cong\altgroup{6}$.
\end{enumerate}
\end{thm}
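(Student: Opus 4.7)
The plan is to analyse a finite subgroup $G\subset\slgroup{3}{\complex}$ according to the reducibility and primitivity of its defining $3$-dimensional representation, and to match each structural possibility with one of the twelve listed types.

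The reducible case is handled by Maschke's theorem. If $\complex^3$ splits as three characters, then $G$ is simultaneously diagonalisable, yielding type~(\ref{gptype:A}). If it splits instead as a $2$-dimensional irreducible summand plus a character, then projection onto the $2$-dimensional factor identifies $G$ with an irreducible subgroup of $\glgroup{2}{\complex}$, yielding type~(\ref{gptype:B}); the scalar on the third coordinate is forced by $\det=1$.

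If the action is irreducible but imprimitive, $G$ permutes three lines $L_1,L_2,L_3$ with $\complex^3=L_1\oplus L_2\oplus L_3$. The kernel of the induced homomorphism to $\symgroup{3}$ is a diagonal abelian group (a group of type~(\ref{gptype:A})) and its image is a transitive subgroup of $\symgroup{3}$. Placing the $L_i$ on the coordinate axes and lifting the $3$-cycle, a diagonal conjugation brings the lift into the form $T$, giving type~(\ref{gptype:C}); if an odd permutation of the $L_i$ is also realised in $G$, a further diagonal conjugation normalises a lift of it to the block antidiagonal shape $Q$ with $abc=-1$, giving type~(\ref{gptype:D}).

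The bulk of the work is the primitive case, which I would organise around the normal subgroup structure. Primitivity forces every abelian normal subgroup of $G$ to act by scalars --- otherwise its isotypic decomposition of $\complex^3$ would yield a $G$-invariant direct sum decomposition --- and hence to lie in the centre $\defsetspan{W}$ of $\slgroup{3}{\complex}$. Passing to $\bar G=G/(G\cap\defsetspan{W})\subset\pglgroup{3}{\complex}$, the generalised Fitting subgroup of $\bar G$ is then a central product of non-abelian simple groups admitting a faithful projective $3$-dimensional representation. Jordan's theorem together with the degree bound sharply restricts the abstract isomorphism type of $\bar G$ to one of: the Hessian-type group $\cyclgroup{3}^2\rtimes\slgroup{2}{\mathbb{F}_3}$ together with its two distinguished subgroups (accounting for types~(\ref{gptype:E})--(\ref{gptype:G})); $\altgroup{5}$ (types~(\ref{gptype:H}) and~(\ref{gptype:J})); $\mathrm{PSL}_2(\mathbb{F}_7)$ (types~(\ref{gptype:I}) and~(\ref{gptype:K})); and $\altgroup{6}$ (type~(\ref{gptype:L})). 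For each candidate I would then exhibit an explicit matrix realisation using $S,T,U,V,W$ and $P$, verifying the orders of the generated groups by direct computation. The main obstacle is precisely this last step --- ruling out further primitive subgroups and showing that the stated generators realise the full group in each case. The classical route, following Blichfeldt, proceeds through a detailed Sylow and character-theoretic analysis of each candidate abstract group, with the matrix realisations then pinned down by the constraint that the image must lie inside $\pglgroup{3}{\complex}$; for computational convenience, the resulting finite list can also be cross-checked directly in GAP.
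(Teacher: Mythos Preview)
The paper does not prove this theorem at all: it is quoted verbatim as a classical classification result, attributed to Blichfeldt~\cite{Blichfeldt17} and restated in the form of \cite[Theorem~A]{Yau-Yu93}, and is used as a black box for the subsequent analysis of primitive groups. There is therefore no ``paper's own proof'' to compare your proposal against.

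That said, your sketch is a reasonable outline of the standard argument, and the reducible and imprimitive cases are essentially correct as written. In the primitive case your organising principle --- that abelian normal subgroups must be central, and that the generalised Fitting subgroup of $\bar G$ is constrained by the existence of a faithful $3$-dimensional projective representation --- is the right modern reformulation of Blichfeldt's approach. The genuine work, as you acknowledge, lies in the exhaustive case analysis ruling out further candidates and in verifying the explicit matrix realisations; your proposal defers this to ``Blichfeldt's Sylow and character-theoretic analysis'' and a GAP check, which is honest but does not itself constitute a proof. If you intend to include a self-contained proof rather than a citation, that case analysis would need to be written out in full.
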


 \begin{prop}\label{prop:sl3:hessian}
  The projective action of the group $E_{108}$ has two orbits of size $6$ and no other orbits of size at most $8$. The points of each of these orbits are in general position (in the sense of Definition~\ref{defin:genPos}). Projective actions of groups $F_{216}$ and $H_{648}$ to \pglgroup{3}{\complex} have no orbits of size at most $8$.
 \end{prop}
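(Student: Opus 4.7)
The plan is to reduce the entire proposition to a finite computation using the chain of inclusions $E_{108}\subset F_{216}\subset H_{648}$ together with the orbit--stabilizer theorem. The projective images have orders $\abs{\bar{E}_{108}}=36$, $\abs{\bar{F}_{216}}=72$, $\abs{\bar{H}_{648}}=216$, so any orbit size divides these; the only divisors at most $8$ are $1,2,3,4,6$ (together with $8$ in the latter two cases), and sizes $5,7$ are therefore automatically ruled out. Since all three groups are primitive (hence in particular transitive), sizes $1$ and $3$ are forbidden as well.

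For $\bar{E}_{108}$ I would enumerate, up to conjugacy, the subgroups of index $2$, $4$ and $6$, and for each of them determine whether its common fixed locus in $\proj{2}$ is non-empty. This is a finite linear-algebra task: for an abelian subgroup one finds the common eigenvectors of the generators, and for a non-abelian one intersects the fixed loci of the generators. Primitivity and this direct check eliminate index-$2$ and index-$4$ subgroups with a projective fixed point, while exactly two conjugacy classes of index-$6$ subgroups turn out to possess common eigenvectors; these give the two claimed orbits $O$ and $O'$, which I would write down explicitly. The general-position property from Definition~\ref{defin:genPos} then reduces to routine numerical verifications on each orbit: the cubic condition is vacuous since $\abs{O}=\abs{O'}=6<8$; no three points are collinear provided all $\binom{6}{3}=20$ determinants $\det(p_i,p_j,p_k)$ are non-zero; and no six points lie on a conic provided the $6\times 6$ Veronese-type matrix with rows $(x^2,y^2,z^2,xy,yz,zx)$ evaluated at the orbit has non-zero determinant.

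For $\bar{F}_{216}$ the key observation is that $\bar{E}_{108}$ has index $2$ and is therefore normal, so every $\bar{F}_{216}$-orbit is a union of $\bar{E}_{108}$-orbits of the same size. A hypothetical $\bar{F}_{216}$-orbit of size at most $8$ must then consist of $O$ or $O'$ alone, which forces the coset representative $\bar{P}$ to preserve $O$ setwise. I would therefore apply $\bar{P}$ to a single point $x\in O$ and check that $\bar{P}(x)\in O'$, showing that $\bar{P}$ in fact interchanges $O$ and $O'$; hence $O\cup O'$ is a single $\bar{F}_{216}$-orbit of size $12$ and no smaller orbit exists. For $\bar{H}_{648}\supset\bar{F}_{216}$ the conclusion is immediate: any $\bar{H}_{648}$-orbit of size at most $8$ would decompose into $\bar{F}_{216}$-orbits all of size at most $8$, contradicting the previous step.

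The main labour is the enumeration of the small-index subgroups of $\bar{E}_{108}$ and the explicit identification of $O$ and $O'$; this is precisely the step for which the author appeals to a computer algebra system. Once $O$ and $O'$ are in hand, everything else reduces to the non-vanishing of a handful of explicit determinants and a single evaluation of $\bar{P}$ on one point, so no further conceptual input is required.
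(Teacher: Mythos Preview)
Your approach is correct but differs from the paper's in how the two size-$6$ orbits of $\bar{E}_{108}$ are located. Rather than enumerating the index-$2$, $4$, $6$ subgroups of $\bar{E}_{108}$ and testing each for a projective fixed point, the paper exploits the monomial subgroup $\langle S,T\rangle\subset E_{108}$: this is precisely the group treated in Lemma~\ref{lemma:T-orb} with $k=3$, so one already knows that $\overline{\langle S,T\rangle}$ has exactly four orbits of size at most~$8$, all of size~$3$ (the distinguished orbit together with the three orbits of Case~\ref{T-orb:3pt}). Since any $\bar{E}_{108}$-orbit of size $\le 8$ must be a union of such $\overline{\langle S,T\rangle}$-orbits, the problem reduces to applying the single extra generator $V$ to these twelve explicit points; the paper then writes the two resulting size-$6$ orbits down and checks general position directly. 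Your subgroup-enumeration route is self-contained and perfectly valid, but the paper's route is shorter in context because it recycles the monomial classification already established. For $\bar{F}_{216}$ and $\bar{H}_{648}$ the two arguments coincide: both show that $P$ merges the two size-$6$ orbits into one of size~$12$, and then pass to $\bar{H}_{648}$ by inclusion. One small remark: your sentence ``exactly two conjugacy classes of index-$6$ subgroups turn out to possess common eigenvectors'' conflates conjugacy classes of stabilisers with orbits; a single conjugacy class of subgroups can account for several orbits if each such subgroup fixes more than one point, so the bookkeeping at that step should track fixed \emph{points} rather than subgroup classes.
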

 \begin{proof}
  For clarity, it is worth noting that the three groups discussed in this proposition are projected to \pglgroup{3}{\complex} isomorphically. 
  From the description, it is clear that $G\subset E_{108}\subset F_{216}\subset H_{648}$, where $G$ is a monomial group generated by elements $S$ and $T$ from Theorem~\ref{thm:sl3:classification}. It is clear that any projective orbit of $E_{108}$ must be a union of projective orbits of $\bar{G}$. By Lemma~\ref{lemma:T-orb}, $\bar{G}$ has exactly $4$ orbits of size at most $8$, all of them of size $3$. Calculating the action of the element $V\in E_{108}\setminus G$ on these orbits, one can see that the $\bar{G}$-orbits form two $\bar{E}_{108}$-orbits of size $6$: one containing points 
  $\left(1:0:0\right)$, 
  $\left(0:1:0\right)$, 
  $\left(0:0:1\right)$, 
  $\left(1:1:1\right)$, 
  $\left(1:\zeta_{3}:\zeta_{3}^2\right)$, 
  $\left(1:\zeta_{3}^2:\zeta_{3}\right)$
  and the other containing points
  $\left(1:\zeta_{3}:\zeta_{3}\right)$, 
  $\left(1:\zeta_{3}^2:1\right)$, 
  $\left(1:1:\zeta_{3}^2\right)$, 
  $\left(1:\zeta_{3}^2:\zeta_{3}^2\right)$, 
  $\left(1:1:\zeta_{3}\right)$, 
  $\left(1:\zeta_{3}:1\right)$.
  It is easy to check directly that the points of both of these orbits are in general position.
  
  A similar computation shows that the action of $P\subset F_{216}$ combines these two orbits into an orbit of size $12$. Therefore, $F_{216}$ and $H_{648}$ do not have any orbits of size at most $8$.  
 \end{proof}
 
 \begin{prop}\label{prop:sl3:A5}
  Let $G\cong\altgroup{5}$ be the group described in Theorem~\ref{thm:sl3:classification}, Case~\ref{gptype:H}. Then $\bar{G}$ has one orbit of size $6$ and no other orbits of size at most $8$.
 \end{prop}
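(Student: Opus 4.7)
The plan is to combine the subgroup lattice of $\altgroup{5}$ with elementary character-theoretic observations about the $3$-dimensional irreducible representation $\rho:\altgroup{5}\hookrightarrow\slgroup{3}{\complex}$ that defines this embedding. Since a $\bar{G}$-orbit of size $n$ on $\proj{2}$ has stabiliser of index $n$ in $\bar{G}$, and the proper subgroups of $\altgroup{5}$ have orders $1,2,3,4,5,6,10,12$, the only possible orbit sizes at most $8$ are $1$, $5$ and $6$, with stabilisers $\altgroup{5}$, $\altgroup{4}$ and $D_{10}$ respectively.

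I would rule out the first two sizes as follows. An orbit of size $1$ would give a $1$-dimensional $\bar{G}$-invariant subspace of $\complex^3$, contradicting the irreducibility of $\rho$. For size $5$, I would use that $\rho$ has character values $3$, $-1$, $0$ at the identity, double transpositions and $3$-cycles respectively; then the restriction of $\rho$ to any $\altgroup{4}\subset\altgroup{5}$ has character $(3,-1,0,0)$, which matches the unique $3$-dimensional irreducible representation of $\altgroup{4}$. Hence $\altgroup{4}$ also acts irreducibly and has no fixed point on $\proj{2}$.

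To produce the orbit of size $6$, let $P=\cyclgroup{5}$ be a Sylow $5$-subgroup of $\altgroup{5}$ generated by a $5$-cycle $g$. Since $g$ and $g^{-1}$ are conjugate in $\altgroup{5}$, the multiset of eigenvalues of $\rho(g)$ is closed under inversion; as it has odd cardinality $3$, exactly one eigenvalue is self-inverse, and being a $5$th root of unity it must equal $1$. Thus $g$ has a $1$-dimensional $+1$-eigenspace $V_g\subset\complex^3$. Any involution $\sigma\in N_{\altgroup{5}}(P)\setminus P=D_{10}\setminus P$ satisfies $\sigma g\sigma^{-1}=g^{-1}$, and hence preserves $V_g$ while swapping the remaining two eigenspaces. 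Therefore $V_g$ is $D_{10}$-invariant and projects to a $D_{10}$-fixed point of $\proj{2}$; by the previous paragraph this point has orbit of size exactly $6$, and because the six Sylow $5$-subgroups of $\altgroup{5}$ are all conjugate, the six points so obtained all lie in a single $\bar{G}$-orbit. The main (though small) technical hurdle is this eigenvalue argument: once one notices that the multiset is closed under inversion and has odd size, the appearance of $1$ as an eigenvalue is forced and avoids any explicit matrix computation.
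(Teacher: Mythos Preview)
Your argument is correct and takes a genuinely different route from the paper. The paper's proof is computational: it fixes generators $\bar g_1,\bar g_2$ of orders $5$ and $3$, observes that any $\bar G$-orbit of size at most $8$ must contain a fixed point of one of the two cyclic subgroups $\langle\bar g_i\rangle$, then explicitly computes the six eigenvectors of the matrices $M_1,M_2$ and checks their $\bar G$-orbit sizes (finding $6$, $10$, $12$, $12$, $20$, $20$). Your approach instead reads off from the subgroup lattice that the only candidate sizes are $1$, $5$, $6$, eliminates $1$ and $5$ by irreducibility and by restricting the character to $\altgroup{4}$, and then pins down a $D_{10}$-fixed point via the inversion symmetry of the eigenvalues of a $5$-cycle. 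This is more conceptual and avoids any coordinate computation; the paper's method, by contrast, is more mechanical and lends itself to computer verification (which is the style used throughout that section).

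One point worth tightening: you construct a size-$6$ orbit and remark that the six Sylow $5$-subgroups give six conjugate points. To conclude there is \emph{only one} such orbit you should make explicit that any point with stabiliser a copy of $D_{10}$ is forced to be the $1$-eigenspace of the corresponding $5$-cycle. This is already contained in your argument---the involution $\sigma$ swaps the $\zeta$- and $\zeta^{-1}$-eigenlines and fixes only the $1$-eigenline---so each $D_{10}$ has a unique fixed point in $\proj{2}$, and since all subgroups of order $10$ are conjugate the uniqueness of the orbit follows. Stating this sentence explicitly closes the argument cleanly.
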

 \begin{proof}
  Since $G$ is a simple group, it is clear that $\bar{G}\cong G\cong\altgroup{5}$. This group can be generated by two elements: $\bar{g}_1$ of order $5$ and $\bar{g}_2$ order $3$ (corresponding to elements $(12345),(345)\in\altgroup{5}$).
  
  Consider $\bar{G}_1,\bar{G}_2\subset \bar{G}$ the cyclic subgroups generated by $\bar{g}_1$ and $\bar{g}_2$ respectively. Since both these subgroups are cyclic of prime order, any $\bar{G}_1$-orbit ($\bar{G}_2$-orbit) must have size $1$ or $5$ ($1$ or $3$ respectively). Let $x\in\proj{2}$ be a point, whose $\bar{G}$-orbit has size at most $8$. Then either the $\bar{G}$-orbit has size $5$ or it contains a $\bar{G}_1$-fixed point. Similarly, either the orbit has size $3$ or $6$, or it contains a $\bar{G}_2$-fixed point. Therefore, any $\bar{G}$-orbit of size at most $8$ contains a point $x_0\in\proj{2}$ fixed by either $\bar{G}_1$ or $\bar{G}_2$.
  
  Given $M_1,M_2\in\slgroup{3}{\complex}$ matrices representing $\bar{g}_1$ and $\bar{g}_2$ respectively (via the standard projection), the $\bar{G}_i$-orbits of size $1$ correspond to the eigenvectors of these matrices. Therefore, it suffices to compute the eigenvectors of these matrices and to check the sizes of the corresponding orbits.
  
  The group $\altgroup{5}$ has two irreducible two-dimensional representations, differing by an outer automorphism. For each of them, the matrices $M_i$ each have three $1$-dimensional eigenspaces. One of the six corresponding points on $\proj{2}$ has a $\bar{G}$-orbit of size $6$, one has an orbit of size $10$, while the other four have orbits of sizes $12$ or $20$. The proposition follows. 
 \end{proof}
 
 \begin{prop}\label{prop:sl3:A5:actionProperties}
  Let $\bar{G}=\altgroup{5}$ acting on \proj{2} as discussed in Proposition~\ref{prop:sl3:A5}. Let $p_1,\ldots,p_6$ be the unique $\bar{G}$-orbit of size $6$. Let $L_{ij}$ be the line on \proj{2} through $p_i$ and $p_j$. Then the following hold:
  \begin{itemize}
   \item The points $p_1,\ldots,p_6$ are in general position (in the sense of Definition~\ref{defin:genPos}).
   \item The action of $\bar{G}$ permutes the lines $L_{ij}$ transitively
  \end{itemize}
 \end{prop}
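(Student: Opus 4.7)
The plan is to exploit the identification $\bar{G}\cong\altgroup{5}\cong\mathrm{PSL}_2(\mathbb{F}_5)$ together with its standard $2$-transitive action on $6$ points. Since the orbit $\defsetshort{p_1,\ldots,p_6}$ has size $6=|\bar{G}|/10$, each point-stabilizer has order $10$ and is therefore isomorphic to $D_5$; since $D_5$ acts faithfully and transitively on a $5$-element set (its Coxeter action on the vertices of a pentagon), the induced action of $\bar{G}$ on $\defsetshort{p_1,\ldots,p_6}$ is $2$-transitive. This alone yields the second bullet of the proposition: $\bar{G}$ is transitive on ordered pairs, hence on the $15$ unordered pairs $\defsetshort{p_i,p_j}$, hence on the lines $L_{ij}$.

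For general position (first bullet), only the first two clauses of Definition~\ref{defin:genPos} need to be checked (the cubic clause is vacuous for $6$ points). For ``no three collinear'', I would argue by contradiction: if some three of the $p_i$ lie on a common line $L$, then the $\bar{G}$-orbit $\mathcal{L}$ of $L$ consists entirely of lines containing at least three of the $p_i$, while any two distinct lines share at most one of the $p_i$ and hence zero of the pairs. Let $H\subseteq\bar{G}$ be the setwise stabilizer of $L$. Irreducibility of $\bar{G}$ on \proj{2} rules out $H=\bar{G}$. Because $\altgroup{5}$ is simple and admits no non-trivial permutation representation on $\le 4$ letters, no proper subgroup of $\bar{G}$ has index less than $5$; hence $|\mathcal{L}|\ge 5$. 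Each line of $\mathcal{L}$ uses at least $\binom{3}{2}=3$ of the $\binom{6}{2}=15$ available pairs, so $3|\mathcal{L}|\le 15$, which forces $|\mathcal{L}|=5$ with every line containing exactly three of the $p_i$ and every pair lying on one such line. This is a Steiner triple system $S(2,3,6)$, which does not exist since $6\not\equiv 1,3\pmod 6$, a contradiction.

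For ``no six on a conic'', suppose $p_1,\ldots,p_6$ lie on a conic $C$. Since the $p_i$ span \proj{2}, such a $C$ is unique and therefore $\bar{G}$-invariant. The induced homomorphism $\bar{G}\to\mbox{Aut}(C)\cong\pglgroup{2}{\complex}$ has kernel normal in the simple group $\altgroup{5}$ and so is either trivial or injective; a trivial action would force $\bar{G}$ to fix a non-degenerate conic pointwise, contradicting the irreducibility of the $\altgroup{5}$-representation used to embed $\bar{G}$ into $\pglgroup{3}{\complex}$. Hence the action on $C$ is faithful. But any finite subgroup of $\pglgroup{2}{\complex}$ fixing a point of $\proj{1}$ is cyclic (its lift to $\glgroup{2}{\complex}$ stabilizes two eigenlines), whereas a $\bar{G}$-orbit of size $6$ on $\proj{1}$ would require a stabilizer of order $10$, i.e.\ the non-cyclic group $D_5$ --- contradiction.

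I expect the main obstacle to lie in the combinatorial step for the no-three-collinear case, where the index bound coming from simplicity of $\altgroup{5}$ must be matched tightly against the pair-count bound coming from $\binom{6}{2}=15$ in order to squeeze the hypothetical configuration into a nonexistent $S(2,3,6)$. Everything else is routine subgroup bookkeeping or a standard application of the fact that point-stabilizers of finite group actions on $\proj{1}$ are cyclic.
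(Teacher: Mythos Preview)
Your argument is correct and genuinely different from the paper's. The paper disposes of this proposition in three words (``By direct computation''): in the proof of the preceding proposition the six points $p_i$ are located explicitly as eigenvectors of the chosen generators, so checking the two bullets is a finite coordinate calculation. Your route is structural and coordinate-free: you extract $2$-transitivity of $\altgroup{5}$ on the $6$-point orbit from the uniqueness (up to conjugacy) of index-$6$ subgroups in $\altgroup{5}\cong\mathrm{PSL}_2(\mathbb{F}_5)$, which immediately gives the second bullet; for the first bullet you run a clean incidence count (the index bound $\ge 5$ against the pair budget $\binom{6}{2}=15$) to force a nonexistent $S(2,3,6)$, and you rule out a $\bar{G}$-invariant conic by the fact that point-stabilisers of finite subgroups of $\pglgroup{2}{\complex}$ are cyclic. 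Each approach has its advantages: the paper's is instantaneous once the explicit representation is in hand, while yours would transfer unchanged to any irreducible $\altgroup{5}$-action with a size-$6$ orbit, without ever writing down a matrix.

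Two small points of hygiene. First, saying that $D_5$ ``acts faithfully and transitively on a $5$-element set'' only shows that some $2$-transitive action on $6$ points exists, not that \emph{this} action is it; the missing line is that all index-$6$ subgroups of $\altgroup{5}$ are conjugate (being normalisers of Sylow $5$-subgroups), so the transitive action on $6$ points is unique up to equivalence and hence coincides with the $2$-transitive $\mathrm{PSL}_2(\mathbb{F}_5)$-action. Second, your parenthetical ``its lift to \glgroup{2}{\complex} stabilises two eigenlines'' is not literally the reason a finite point-stabiliser in $\pglgroup{2}{\complex}$ is cyclic; the clean argument is that such a stabiliser sits inside the affine group $\complex^{*}\ltimes\complex$, whose finite subgroups are cyclic because $(\complex,+)$ is torsion-free. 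Neither point affects the validity of your proof.
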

 \begin{proof}
  By direct computation.
 \end{proof}

 \begin{prop}\label{prop:sl3:others}
  Let $G$ be one of the groups described in Theorem~\ref{thm:sl3:classification}, Types~(\ref{gptype:I}) or~(\ref{gptype:L}). Then $\bar{G}$ has no orbits of size at most $8$.
 \end{prop}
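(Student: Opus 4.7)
The plan is to follow the template of Proposition~\ref{prop:sl3:A5}: for each group type, use the simplicity of $\bar{G}$ together with the list of its maximal subgroups to produce a short list of candidate orbit sizes at most~$8$, and then rule each one out by showing that the corresponding stabilizer has no fixed point on $\proj{2}$. In both cases the orbit of size~$1$ is automatically excluded because the representation $G\subset\slgroup{3}{\complex}$ of a primitive group is irreducible on $\complex^{3}$.

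For Type~(\ref{gptype:I}), the simple group $\bar{G}\cong\mathrm{PSL}_{2}(\mathbb{F}_{7})$ of order~$168$ has, up to conjugation, exactly two classes of maximal subgroups: $\symgroup{4}$ of index~$7$ and a Frobenius group $\cyclgroup{7}\rtimes\cyclgroup{3}$ of index~$8$. By simplicity, the only orbit sizes at most~$8$ that remain to be ruled out are $7$ and $8$. I would fix explicit matrix generators of each candidate stabilizer inside the representation of Theorem~\ref{thm:sl3:classification}, compute the three $1$-dimensional eigenspaces of a generator of order~$7$ (respectively of order~$4$), and verify that no such eigenline is preserved by the remaining generators of the stabilizer. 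This argument is directly analogous to the eigenvector computation at the end of the proof of Proposition~\ref{prop:sl3:A5}.

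For Type~(\ref{gptype:L}) the projective group $\bar{G}\cong\altgroup{6}$ has minimal proper-subgroup index~$6$, realised by $\altgroup{5}$, with the next-smallest index being~$10$; so the only candidate orbit size at most~$8$ is~$6$, with stabilizer $\altgroup{5}$. Since the Schur multiplier of $\altgroup{5}$ has no $3$-torsion, any central extension of $\altgroup{5}$ by $\cyclgroup{3}$ splits, and in particular the preimage of $\altgroup{5}$ in $G$ contains $\altgroup{5}$ as a genuine subgroup. Restricting the $3$-dimensional representation of $G$ to this $\altgroup{5}$ yields a faithful representation of a simple group; since $\altgroup{5}$ has no $2$-dimensional complex irrep, the only faithful $3$-dimensional representations of $\altgroup{5}$ are its two irreducible ones, and these have no invariant line. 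Hence $\altgroup{5}$ has no fixed point on $\proj{2}$, and the orbit of size~$6$ is ruled out.

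The main obstacle is the routine but fiddly step of pinning down the candidate stabilizers as explicit matrix subgroups inside the representation of Theorem~\ref{thm:sl3:classification}: once generators of a maximal subgroup are written down in the chosen $3$-dimensional representation, each remaining check reduces to a short eigenspace computation of the type that the paper's preamble outsources to GAP.
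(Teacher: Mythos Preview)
Your proposal is correct and takes a genuinely different route from the paper. The paper follows the template of Proposition~\ref{prop:sl3:A5} literally: it chooses two generators of coprime orders (orders $7$ and $3$ for Type~(\ref{gptype:I}); orders $4$ and $5$ for Type~(\ref{gptype:L})), observes that any $\bar{G}$-orbit of size at most $8$ must contain a fixed point of at least one of the two cyclic subgroups they generate, and then computes (via GAP) the full $\bar{G}$-orbit of each of the six eigenvectors, obtaining orbit sizes $21,24,28,56$ and $36,45,72,90$ respectively. You instead exploit the subgroup lattice: simplicity of $\bar{G}$ forces the index of any proper subgroup to be at least $7$ (for $\mathrm{PSL}_2(\mathbb{F}_7)$) or $6$ (for $\altgroup{6}$), after which you rule out each candidate stabilizer by showing it fixes no line in $\complex^3$. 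Your argument for $\altgroup{6}$ is especially clean --- the Schur-multiplier splitting plus the fact that every faithful $3$-dimensional representation of $\altgroup{5}$ is irreducible disposes of the size-$6$ orbit with no computation at all, whereas the paper relies on an explicit orbit calculation. For Type~(\ref{gptype:I}) the two approaches are roughly comparable in computational load; one could streamline yours further by noting that the restriction of either $3$-dimensional irreducible of $\mathrm{PSL}_2(\mathbb{F}_7)$ to $\symgroup{4}$ or to $\cyclgroup{7}\rtimes\cyclgroup{3}$ is itself irreducible. One small correction that does not affect the argument: $\mathrm{PSL}_2(\mathbb{F}_7)$ has \emph{three} conjugacy classes of maximal subgroups (two classes of $\symgroup{4}$, exchanged by the outer automorphism, and one class of $\cyclgroup{7}\rtimes\cyclgroup{3}$), not two.
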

 \begin{proof}
  The computation for this result follows the idea described in the proof of Proposition~\ref{prop:sl3:A5}. For groups of Type~(\ref{gptype:I}), look at the generators of orders~$7$ and~$3$. It is easy to check that the corresponding cyclic groups are projected to \pglgroup{3}{\complex} isomorphically, so it suffices to check the orbits corresponding to their eigenvectors. There are $2$ irreducible $3$-dimensional representations of this group, and in both of them these orbits have sizes~$21$, $24$, $28$ or~$56$. Therefore, none of the $\bar{G}$-orbits have sizes at most $8$.
  
  For groups of Type~(\ref{gptype:L}), $G\cong3\altgroup{6}$, and $\bar{G}\cong\altgroup{6}$. The standard generators for $G$ are $A$ of order $2$ and $B$ of order $4$ (see~\cite{ATLAS}). To get generators of coprime orders, instead take generators $B$ and $ABB$ (of order $5$). The rest of the computations follows as above: the relevant $\bar{G}$-orbits have sizes~$36$, $45$, $72$ and~$90$.
 \end{proof}

 \begin{lemma}\label{lemma:sl3:primitive:conclusion}
  Let $G\subset\slgroup{3}{\complex}$ be a finite primitive subgroup, and $\bar{G}\subset\pglgroup{3}{\complex}$ its image under the natural projection. Assume that $\bar{G}$ has an orbit of size at most $8$. Then one of the following holds:
  \begin{itemize}
   \item $\bar{G}$ has exactly two orbits of size $6$, $G\cong E_{108}$ and $\bar{G}\cong E_{108}/\cyclgroup{3}$.
   \item $\bar{G}$ has a single orbit of size $6$ and $\bar{G}\cong\altgroup{5}$.
  \end{itemize}
 \end{lemma}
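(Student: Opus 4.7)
The plan is to prove the lemma by working through the classification in Theorem \ref{thm:sl3:classification}, which divides all finite subgroups of $\slgroup{3}{\complex}$ into types (A)--(L). Types (A)--(D) are, by construction, either reducible or monomial (thus intransitive or imprimitive), so they are not primitive. I therefore only need to inspect Types (E)--(L) one by one.

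Most cases are handled directly by the propositions already proved. For Types (E), (F), (G) --- namely $E_{108}$, $F_{216}$, $H_{648}$ --- Proposition \ref{prop:sl3:hessian} gives exactly two orbits of size $6$ for $E_{108}$ and no orbits of size at most $8$ for the other two. For Type (H), Proposition \ref{prop:sl3:A5} gives precisely one orbit of size $6$ for $\bar{G}\cong\altgroup{5}$ and no other small orbits. For Types (I) and (L), Proposition \ref{prop:sl3:others} shows there are no orbits of size at most $8$. So the only primitive types contributing to the statement are (E) and (H), which correspond exactly to the two bullets in the conclusion.

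The only remaining cases are Types (J) and (K), which are not directly covered by the propositions above. The key observation is that each of these groups is obtained from a smaller one by adjoining the scalar matrix $W=\omega I$. Since $W$ lies in the kernel of $\slgroup{3}{\complex}\to\pglgroup{3}{\complex}$, the projective image $\bar{G}$ of the Type (J) group equals the projective image of the Type (H) group $\altgroup{5}$, and similarly the projective image of the Type (K) group equals that of the Type (I) group. Hence the orbit structure of Type (J) is identical to that of Type (H) (one orbit of size $6$ and no other small orbits), and Type (K) has no small orbits at all. Moreover, since $\altgroup{5}$ is simple and its projective image is the same, Type (J) does not produce a genuinely new conclusion beyond what Type (H) already provides.

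The main (very mild) obstacle is ensuring that Types (J) and (K) are correctly subsumed into Types (H) and (I) at the projective level; once this is explained, the lemma follows by simply assembling the previous propositions. I will conclude by noting that the two surviving cases, $\bar{G}\cong E_{108}/\cyclgroup{3}$ and $\bar{G}\cong\altgroup{5}$, match the two bullets in the statement exactly, with orbit counts as recorded in Propositions \ref{prop:sl3:hessian} and \ref{prop:sl3:A5}.
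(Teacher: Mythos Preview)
Your proposal is correct and follows essentially the same approach as the paper's own proof: restrict to Types~(E)--(L) of Theorem~\ref{thm:sl3:classification}, invoke Propositions~\ref{prop:sl3:hessian}, \ref{prop:sl3:A5}, and~\ref{prop:sl3:others} for Types~(E)--(I) and~(L), and observe that Types~(J) and~(K) have the same projective images as Types~(H) and~(I) because they differ only by adjoining the scalar $W$. The paper states all of this in a single compressed paragraph, whereas you spell out the reduction for (J) and (K) a bit more explicitly, but the logic is identical.
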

 \begin{proof}
  Since the action of $G$ is primitive, $G$ must be one of the groups described in Theorem~\ref{thm:sl3:classification}, Types~(\ref{gptype:E}---\ref{gptype:L}). The groups of Types~(\ref{gptype:J}) and~(\ref{gptype:K}) have the same \pglgroup{3}{\complex}-images as groups of Types~(\ref{gptype:H}) and~(\ref{gptype:I}) respectively. The rest of the groups are discussed in Propositions~\ref{prop:sl3:hessian}, \ref{prop:sl3:A5} and~\ref{prop:sl3:others}.
 \end{proof}
 
 \section{Elementary links including \proj{2}}

 Now that the list of groups $\bar{G}$ acting on \proj{2} with an orbit of size at most $8$ has been determined, one needs to see what elementary links arise involving the $\bar{G}$-surface $S=\proj{2}$. Throughout this section, the notation from Figure~\ref{figure:links} will be used to discuss the relevant parts of the elementary links.
 
 \begin{prop}\label{prop:link:orb1}
  Let $G\subset\slgroup{3}{\complex}$ be a finite subgroup, whose action is not irreducible. Then there exists a $\bar{G}$-link $\chi:S=\proj{2}\dashrightarrow S'$ with $S'\neq\proj{2}$.
 \end{prop}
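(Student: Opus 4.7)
The plan is to exhibit an explicit elementary link of Type~I starting at $\proj{2}$, taking advantage of the fact that a reducible action of $G\subset\slgroup{3}{\complex}$ forces $\bar{G}$ to have a fixed point on $\proj{2}$.

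First, I would observe that the non-irreducibility of $G$ means there is a proper $G$-invariant subspace of $\complex^3$; since $G$ is finite, Maschke's theorem produces an invariant complement, and in particular there is a $1$-dimensional invariant subspace. Its image $p\in\proj{2}$ is a $\bar{G}$-fixed point, i.e.\ a $\bar{G}$-orbit of size $1$. Let $\pi\colon Z\to\proj{2}$ be the blowup of $p$; since $p$ is $\bar{G}$-fixed, $\pi$ is a $\bar{G}$-equivariant morphism, and $Z$ is isomorphic to the Hirzebruch surface $\mathbb{F}_1$.

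Next I would equip $Z$ with its natural $\mathbb{P}^1$-fibration $\phi'\colon Z\to\proj{1}$ corresponding to the pencil of lines through $p$. Because $\bar{G}$ fixes $p$, it acts on this pencil, so $\phi'$ is $\bar{G}$-equivariant with some linear $\bar{G}$-action on the base $\proj{1}$. This realises $Z=\mathbb{F}_1$ as a $\bar{G}$-equivariant conic bundle. It is then necessary to check that the set-up fits the Type~I diagram of Figure~\ref{figure:links}: $S=\proj{2}$ is a Del~Pezzo $\bar{G}$-surface mapping to a point, $Z=S'$ is a conic bundle over $\proj{1}$, $\pi$ is the blowup of the $\bar{G}$-orbit $\{p\}$, and the induced rational map $\chi=\pi^{-1}$ is a birational $\bar{G}$-map. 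Finally, $S'=\mathbb{F}_1\not\cong\proj{2}$, so $\chi$ is the desired link.

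The only genuine thing to verify beyond the above construction is that $\pi$ really is an elementary (Sarkisov) link in the $\bar{G}$-equivariant category, namely that $S'$ is a $\bar{G}$-minimal conic bundle. This amounts to showing $\mathrm{Pic}(Z)^{\bar{G}}\cong\integers^{2}$, which follows since $\mathrm{Pic}(\mathbb{F}_1)=\integers\langle E,F\rangle$ and both the exceptional section $E=\pi^{-1}(p)$ and the fiber class $F$ of $\phi'$ are $\bar{G}$-invariant. The two extremal rays of $\overline{NE}(Z)^{\bar{G}}$ are spanned by $E$ and $F$; contracting $E$ recovers $\pi$, and contracting $F$ gives $\phi'$, so the diagram is a valid Type~I link. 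I expect this verification of $\bar{G}$-minimality of the resulting conic bundle to be the only step requiring care; everything else is formal from the existence of a $\bar{G}$-fixed point.
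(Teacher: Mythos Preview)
Your proposal is correct and follows essentially the same approach as the paper: find a $\bar{G}$-fixed point $p\in\proj{2}$ arising from a $G$-invariant line in $\complex^3$, blow it up to obtain $Z\cong\mathbb{F}_1$ with its $\bar{G}$-equivariant conic bundle structure, and record this as a Type~I link with $S'=Z\not\cong\proj{2}$. You add detail the paper omits---invoking Maschke's theorem to guarantee a one-dimensional invariant subspace and verifying $\mathrm{Pic}(Z)^{\bar{G}}\cong\integers^2$---but the underlying argument is identical.
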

 \begin{proof}
  Since, the action of $G$ is not irreducible, it must preserve a line in $\complex^3$. Therefore, the action of $\bar{G}$ has a fixed point $p\in\proj{2}$. Let \defmapname{\pi}{Z}{S} be the blowup of this point. Then $Z$ has the structure of a $\bar{G}$-conic bundle, and taking $S'=Z$ defines a $\bar{G}$-invariant Type~I elementary link, with $S'\neq\proj{2}$.
 \end{proof}
 
 \begin{prop}\label{prop:link:dist_orb}
  Let $G\subset\slgroup{3}{\complex}$ be a subgroup acting irreducibly monomially. Let $\chi:S=\proj{2}\dashrightarrow S'$ a $\bar{G}$-link with $\pi:Z\rightarrow S$ the blowup of an orbit of size $3$. Then $S'=\proj{2}$.
 \end{prop}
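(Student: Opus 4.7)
The plan is to exploit the fact that blowing up three points of $\proj{2}$ in general position produces a Del~Pezzo surface of degree $6$ with a very restricted $(-1)$-curve configuration, and then to use $\bar{G}$-equivariance to pin down the only possible output of an elementary link.

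First, since in the cases of Lemma~\ref{lemma:T-orb} producing orbits of size $3$ (and, in the distinguished case, by the explicit coordinates) the three points of the orbit are in general position, $Z = \mathrm{Bl}_{\{p_1,p_2,p_3\}}\proj{2}$ is a Del~Pezzo surface of degree $6$. I would recall the standard fact that $Z$ carries exactly six $(-1)$-curves, namely the three exceptional divisors $E_1, E_2, E_3$ and the three strict transforms $\tilde{L}_{12}, \tilde{L}_{13}, \tilde{L}_{23}$ of the lines through pairs of the blown-up points, and that they form a hexagon in the dual graph. In particular, there are exactly two maximal sets of pairwise disjoint $(-1)$-curves, $\{E_1, E_2, E_3\}$ and $\{\tilde{L}_{12}, \tilde{L}_{13}, \tilde{L}_{23}\}$, and contracting either one returns $\proj{2}$.

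Next, I would observe that $\bar{G}$ permutes $\{E_1, E_2, E_3\}$ transitively (because the blown-up points form a single $\bar{G}$-orbit), and permutes $\{\tilde{L}_{12}, \tilde{L}_{13}, \tilde{L}_{23}\}$ transitively as well: the image of $\bar{G}$ in $\symgroup{3}$ acting on the three points is either $\altgroup{3}$ or $\symgroup{3}$, and both act transitively on the three unordered pairs. A direct computation of the classes $f\in\mathrm{Pic}(Z)$ with $f^2=0$ and $(-K_Z)\cdot f = 2$ shows that the only primitive effective such classes are $L - E_i$ for $i=1,2,3$, where $L$ is the pullback of the hyperplane class; by the transitivity noted above, none of them is $\bar{G}$-invariant. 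Hence $Z$ admits no $\bar{G}$-equivariant conic bundle structure, which rules out $\chi$ being a link of Type~I. Types~III and~IV are impossible with source $S = \proj{2}$ anyway, since $\proj{2}$ itself carries no conic bundle structure.

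Therefore $\chi$ must be a link of Type~II, in which $\pi' : Z \to S'$ is a $\bar{G}$-equivariant contraction of a single $\bar{G}$-orbit of pairwise disjoint $(-1)$-curves on $Z$, different from $\{E_1,E_2,E_3\}$. The hexagon forces this orbit to be $\{\tilde{L}_{12}, \tilde{L}_{13}, \tilde{L}_{23}\}$, which is indeed a single $\bar{G}$-orbit by the transitivity established above, and after contracting it we obtain $S' \cong \proj{2}$. The only mildly subtle point is the transitivity of $\bar{G}$ on $\{\tilde{L}_{12}, \tilde{L}_{13}, \tilde{L}_{23}\}$ — i.e.\ on unordered pairs of orbit points — needed to rule out all three conic bundle structures simultaneously; everything else is bookkeeping on the hexagon of $(-1)$-curves.
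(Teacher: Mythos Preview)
Your proof is correct and follows the same line as the paper's: rule out a $\bar{G}$-equivariant conic bundle on $Z$, then observe that the only nontrivial $\bar{G}$-orbit of disjoint $(-1)$-curves on the degree~$6$ Del~Pezzo is $\{\tilde{L}_{12},\tilde{L}_{13},\tilde{L}_{23}\}$, whose contraction is the standard Cremona involution back to $\proj{2}$. The paper states both of these facts without justification, whereas you actually verify them (the transitivity on the $E_i$, hence on the pencils $\lvert L-E_i\rvert$, and the hexagon analysis), so your write-up is more complete but not a different argument.
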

 \begin{proof}
  The surface $Z$ does not have the structure of a $\bar{G}$-conic bundle, so $S'$ must be a blowdown of a number of orbits of $-1$-curves on $Z$. Unless $\pi'$ blows down the exceptional divisors of $\pi$ (making $\chi$ the identity), this is just the standard Cremona involution, making $S'=\proj{2}$.
 \end{proof}
 
 \begin{prop}\label{prop:link:orb3}
  Let $G\subset\slgroup{3}{\complex}$ be a subgroup acting irreducibly monomially. Assume $\bar{G}\cong\cyclgroup{3}\rtimes\cyclgroup{3}$, having four orbits of size $3$. Let $\chi:S=\proj{2}\dashrightarrow S'$ a $\bar{G}$-link. Then $S'=\proj{2}$.
 \end{prop}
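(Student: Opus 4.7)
The plan is to reduce Proposition~\ref{prop:link:orb3} directly to Proposition~\ref{prop:link:dist_orb}. First I would invoke Lemma~\ref{lemma:sl3:mono:conclusion} to list the $\bar{G}$-orbits on \proj{2} of size at most $8$: for $\bar{G}\cong\cyclgroup{3}\rtimes\cyclgroup{3}$ these are exactly the four orbits of size~$3$, namely the distinguished orbit $\{(1:0:0),(0:1:0),(0:0:1)\}$ together with the three orbits made up of points of the form $(1:\zeta_3^a:\zeta_3^b)$ from Case~\ref{T-orb:3pt} of Lemma~\ref{lemma:T-orb}.

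Next I would note that any elementary $\bar{G}$-link $\chi:\proj{2}\dashrightarrow S'$ must begin with a blowup $\pi:Z\rightarrow\proj{2}$ of a single $\bar{G}$-orbit, where $Z$ is required to be a Del~Pezzo surface and therefore the orbit has size at most~$8$. Consequently $\pi$ must be the blowup of one of the four size-$3$ orbits listed above. A short Vandermonde-style determinant check confirms that, in each case, the three points of the orbit are non-collinear, so the resulting $Z$ is a degree-$6$ Del~Pezzo surface.

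At this stage Proposition~\ref{prop:link:dist_orb} applies verbatim --- its hypothesis is simply that $\pi$ blow up a $\bar{G}$-orbit of size~$3$ on \proj{2} (for an irreducibly monomial $\bar{G}$) --- and forces $S'=\proj{2}$ in each case. The point that deserves care, which I would treat as the main (if mild) obstacle, is checking that the argument inside the proof of Proposition~\ref{prop:link:dist_orb} is genuinely insensitive to the choice of orbit: its two ingredients are the absence of a $\bar{G}$-invariant conic bundle on $Z$ (which follows from an integrality obstruction on the invariant part of $\mbox{Pic}(Z)$ that depends only on the orbit size) and the $\bar{G}$-equivariance of the standard Cremona involution centred at the three blown-up points (which follows from the stabiliser combinatorics being the same for every size-$3$ orbit, so that the Cremona normalises $\bar{G}$). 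Once this verification is made, the proposition follows immediately.
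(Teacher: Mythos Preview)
Your reduction to Proposition~\ref{prop:link:dist_orb} handles the case where $\pi$ blows up a \emph{single} size-$3$ orbit, and that part agrees with the paper. However, the paper's own proof also treats the case where $\pi$ blows up \emph{two} of the four size-$3$ orbits simultaneously (six points in total, giving a cubic surface $Z$); it then argues that the $27$ $(-1)$-curves on $Z$ fall into nine $\bar{G}$-orbits of three curves each, so that any $\bar{G}$-minimal blowdown $\pi':Z\rightarrow S'$ contracts two such orbits and hence $S'=\proj{2}$. You assert flatly that an elementary link must begin with the blowup of a single $\bar{G}$-orbit; under the strict Sarkisov convention (where $Z$ is required to have $\mbox{Pic}(Z)^{\bar{G}}\cong\integers^2$) this is correct, since blowing up two distinct $\bar{G}$-orbits produces invariant Picard rank~$3$. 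But the paper's description of links is looser (``blowups of $G$-orbits''), and in this proposition it explicitly allows the six-point case, so within the paper's framework your argument has a gap.

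The fix is cheap either way: you can add a one-line Picard-rank justification for why only a single orbit may be blown up, or you can simply supply the missing two-orbit analysis as the paper does. Note also that your closing paragraph about verifying the internals of Proposition~\ref{prop:link:dist_orb} is unnecessary --- that proposition is stated for an arbitrary size-$3$ orbit, not only the distinguished one, so once you invoke it no further checking of the Cremona normaliser combinatorics is required.
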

 \begin{proof}
  By Lemma~\ref{lemma:sl3:mono:conclusion}, the action of $\bar{G}$ has no other orbits of size at most $8$. Therefore, $Z$ must be a blowup of one or two of the orbits of size $3$ on $S$. If $Z$ is a blowup of a single orbit of size $3$, then $S'=\proj{2}$ by Proposition~\ref{prop:link:dist_orb}. Otherwise, $Z$ is the blowup of $6$ points on $\proj{2}$, and $S'$ is the blowdown of a number of $\bar{G}$-orbits of $-1$-curves on $Z$. There are $9$ such orbits, each containing $3$ curves. Since $S'$ must be $\bar{G}$-minimal, it must be the blowup of two of these orbits. Thus, $S'=\proj{2}$.
 \end{proof}

 \begin{prop}\label{prop:link:orb7}
  Let $G\subset\slgroup{3}{\complex}$ be a subgroup acting irreducibly monomially. Assume $\bar{G}\cong\cyclgroup{7}\rtimes\cyclgroup{3}$, having three orbits of size $7$ and one orbit of size $3$. Let $\chi:S=\proj{2}\dashrightarrow S'$ a $\bar{G}$-link. Then $S'=\proj{2}$.
 \end{prop}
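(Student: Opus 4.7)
\medskip\noindent\emph{Proof proposal.} By Lemma~\ref{lemma:sl3:mono:conclusion}, the only $\bar G$-orbits on $\proj{2}$ of size at most $8$ are the distinguished orbit of size $3$ and three orbits of size $7$. Since the blowup $\pi:Z\to\proj{2}$ must contract a union of $\bar G$-orbits whose total size does not exceed $8$, either $\pi$ is the blowup of the distinguished orbit---in which case Proposition~\ref{prop:link:dist_orb} gives $S'=\proj{2}$ directly---or $\pi$ is the blowup of exactly one orbit of size $7$, making $Z$ a Del~Pezzo surface of degree $2$. The plan is to dispose of this remaining case by analysing the $\bar G$-invariant Mori cone on $Z$.

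I would begin by enumerating the $\bar G$-orbits of $(-1)$-curves on $Z$. A short check on the Frobenius group $\bar G\cong\cyclgroup{7}\rtimes\cyclgroup{3}$ acting in its natural permutation representation on $7$ points (say, on $\mathbb{F}_7$ via $g(x)=\zeta x+c$ with $\zeta\in\{1,2,4\}$) shows that only the identity fixes any $2$-element subset, so $\bar G$ is regular on unordered pairs. Hence the $56$ $(-1)$-curves split into exactly four $\bar G$-orbits of sizes $7$, $21$, $21$ and $7$: the exceptional divisors $E_i$; the strict transforms of the lines through pairs of orbit-points; the strict transforms of the conics through any five of them; and the strict transforms of the plane cubics with a node at one orbit-point and passing through the other six.

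Next, in the basis $(H,E)$ for $\mbox{Pic}(Z)^{\bar G}$ with $E:=E_1+\cdots+E_7$, the four orbit-sums give invariant classes in directions $(0,1)$, $(21,-6)$, $(42,-15)$ and $(21,-8)$ respectively. Comparing slopes ($+\infty$, $-2/7$, $-5/14$, $-8/21$) shows that the two extremal rays of the $\bar G$-invariant Mori cone are the exceptional divisors (recovering $\pi$) and the orbit of seven cubics. Both are birational contractions, so $Z$ admits no $\bar G$-equivariant conic bundle structure, ruling out any Type~I link and leaving only a Type~II link whose second contraction is of the seven cubics.

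Finally, to identify the target: writing $C_i=3H-2E_i-\sum_{j\neq i}E_j$, a direct intersection computation gives $C_i\cdot C_j=9-2-2-5=0$ for $i\neq j$, so the seven cubics are pairwise disjoint. Their simultaneous $\bar G$-equivariant contraction therefore yields a smooth rational surface $S'$ of Picard rank $1$, which must be $\proj{2}$. The main subtlety is the numerical comparison of orbit-slopes that singles out the cubics as the second extremal ray; given this, the disjointness check and the conclusion $S'\cong\proj{2}$ are immediate.
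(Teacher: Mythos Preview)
Your argument is correct. The paper takes a shorter route in the size-$7$ case: it simply observes that $Z$ is a Del~Pezzo surface of degree $2$, hence a double cover of $\proj{2}$ branched over a quartic, and that the covering involution (the Geiser involution, called the ``Bertini involution'' in the paper) swaps the two size-$7$ orbits of $(-1)$-curves; this immediately identifies $\chi$ with a known Type~II link back to $\proj{2}$, citing \cite[Section~2.4]{Dolgachev-Iskovskikh09}.

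Your approach instead computes the $\bar G$-invariant Mori cone directly: you enumerate the four $\bar G$-orbits of $(-1)$-curves, compare slopes of their orbit sums in $\mbox{Pic}(Z)^{\bar G}$, and find that the two extremal rays are the exceptional curves and the seven nodal cubics; you then check $C_i\cdot C_j=0$ to see that the second contraction lands on $\proj{2}$. This is more work but is entirely self-contained---it does not rely on recognising the Geiser involution---and it explicitly rules out a Type~I link by showing neither extremal ray supports a conic-bundle fibration, a point the paper leaves implicit. The paper's approach is cleaner if one already knows the classical involutions; yours makes the mechanism transparent and would generalise more readily to unfamiliar situations.
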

 \begin{proof}
  By the definition of an elementary link, $\pi:Z\rightarrow S$ must be a $\bar{G}$-equivariant blowup of at most $8$ points on $S$. Therefore, it is either the blowup of the orbit of size $3$ or the blowup of one of the orbits of size $7$. If it is the blowup of the orbit of size $3$, then $S'=\proj{2}$ by Proposition~\ref{prop:link:dist_orb}. Assume $Z$ is the blowup of one of the size $7$ orbits. This makes $\chi$ the standard Bertini involution (see, for example,~\cite[Section~2.4]{Dolgachev-Iskovskikh09}): $Z$ is the Del~Pezzo surface of degree $2$, which is the double cover of \proj{2} ramified in a quartic. The involution of the double cover interchanges orbits of $-1$-curves (of size~$7$), giving rise to $\chi$. In this case, $S'=\proj{2}$.
 \end{proof}

 \begin{prop}\label{prop:link:orb4}
  Let $G=\altgroup{4}\mbox{ or }\symgroup{4}\subset\slgroup{3}{\complex}$, acting irreducibly. Then there exists a $\bar{G}$-link $\chi:S=\proj{2}\dashrightarrow S'$ with $S'\neq\proj{2}$.
 \end{prop}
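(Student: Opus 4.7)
The plan is to exhibit an explicit Type~I elementary link: blow up the $\bar{G}$-orbit of size~$4$ on $\proj{2}$ and recognise the resulting surface as a $\bar{G}$-equivariant conic bundle, forced by a $\bar{G}$-invariant pencil of conics through that orbit.

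Working in the basis of Remark~\ref{rmk:A4:geometry}, I would start from the three $\bar{T}$-invariant conics $C_1,C_2,C_3$ and the orbit $O_1$ of size~$4$ with $O_1\subset C_2\cap C_3$. Since $C_2$ and $C_3$ are distinct smooth conics sharing four points, Bezout forces $C_2\cap C_3=O_1$, so the linear system $\defsetshort{\lambda C_2+\mu C_3\mid(\lambda:\mu)\in\proj{1}}$ is a pencil of conics with base locus exactly $O_1$. For $\bar{G}\cong\altgroup{4}$, each $C_i$ is individually $\bar{G}$-invariant, so this pencil is invariant with trivial $\bar{G}$-action on the base $\proj{1}$; for $\bar{G}\cong\symgroup{4}$, Lemma~\ref{lemma:G-orb} says $\sigma$ fixes $O_1$ and interchanges $C_2$ with $C_3$, so the pencil is still $\bar{G}$-invariant (with $\sigma$ acting by a non-trivial involution on the base $\proj{1}$). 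Letting $\pi:Z\rightarrow\proj{2}$ be the blowup of $O_1$, the proper transforms of members of this pencil are disjoint curves of self-intersection~$0$ and arithmetic genus~$0$, giving a $\bar{G}$-equivariant morphism $\phi':Z\rightarrow\proj{1}$ whose general fibre is $\proj{1}$, i.e.\ a $\bar{G}$-conic bundle structure on $Z$.

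It then remains to check that this data does assemble into an elementary link in the sense of Figure~\ref{figure:links}, which amounts to verifying that $Z$ is a minimal $\bar{G}$-conic bundle (equivalently, $\mathrm{Pic}(Z)^{\bar{G}}\cong\integers^2$). Since $\bar{G}$ acts transitively on the four points of $O_1$ and hence transitively on the four exceptional divisors, $\mathrm{Pic}(Z)^{\bar{G}}$ is generated by the pullback of a line and by the sum $E_1+E_2+E_3+E_4$; this gives rank exactly~$2$, with the conic bundle class and the $(-K_Z)$ class spanning. Both classes are distinct (one has square~$0$, the other is $(-K_Z)^2=5$), so $Z$ admits a unique extremal contraction besides $\pi$, namely $\phi'$, confirming that the diagram is an elementary link of Type~I with $S'=Z$ a conic bundle, hence $S'\neq\proj{2}$.

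The main obstacle I anticipate is not the existence of the pencil (which is immediate from the geometry in Remark~\ref{rmk:A4:geometry}) but keeping the $\symgroup{4}$-equivariance transparent when $\sigma$ permutes the conics generating the pencil; the clean way around this is to pass through the invariant Picard-rank computation above, which treats both groups uniformly and bypasses any need to track the $\bar{G}$-action on reducible fibres of $\phi'$ individually.
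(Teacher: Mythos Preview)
Your proposal is correct and follows exactly the same route as the paper: blow up the $\bar{G}$-orbit of size~$4$ (guaranteed by Lemma~\ref{lemma:sl3:mono:conclusion}) to obtain a $\bar{G}$-equivariant conic bundle $Z$, giving a Type~I link with $S'=Z\neq\proj{2}$. The paper's proof simply asserts that $Z$ carries a $\bar{G}$-conic bundle structure, whereas you supply the details the paper omits---the explicit invariant pencil coming from $C_2,C_3$ in Remark~\ref{rmk:A4:geometry} and the rank computation $\mathrm{Pic}(Z)^{\bar{G}}\cong\integers^2$---so your argument is a fleshed-out version of the same proof rather than a different one.
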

 \begin{proof}
  By Lemma~\ref{lemma:sl3:mono:conclusion}, the action of $\bar{G}$ on $S$ has an orbit of size $4$, whose points lie in general position. Let $p_1,\ldots p_4\in\proj{2}$ be such an orbit, and let \deffunname{\pi}{Z}{S} be the blowup of these points. Then $Z$ has the structure of a $\bar{G}$-conic bundle. Therefore, taking $S'=Z$ defines a Type~I elementary $\bar{G}$-link, with $S'\neq\proj{2}$.
 \end{proof}

 \begin{prop}\label{prop:link:E108}
  Let $G=E_{108}\subset\slgroup{3}{\complex}$, acting primitively. Let $\chi:S=\proj{2}\dashrightarrow S'$ a $\bar{G}$-link. Then $S'=\proj{2}$.
 \end{prop}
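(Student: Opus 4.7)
The plan is to identify all extremal $\bar{G}$-contractions of $Z$ and to show that each lands on $\proj{2}$.

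By Proposition~\ref{prop:sl3:hessian}, $\bar{G}=\bar{E}_{108}$ has exactly two orbits on $\proj{2}$ of size at most $8$, both of size $6$ and consisting of points in general position. Since $\pi\colon Z\to S$ must blow up $\bar{G}$-orbits totalling at most $8$ points, and $6+6=12>8$, it blows up exactly one of these two orbits. Hence $Z$ is a smooth cubic surface (Del~Pezzo of degree $3$).

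I would then compute $\mbox{Pic}(Z)^{\bar{G}}$. Writing $\mbox{Pic}(Z)=\mathbb{Z}H\oplus\bigoplus_{i=1}^{6}\mathbb{Z}E_{i}$ with $H=\pi^{*}\mathcal{O}_{\proj{2}}(1)$ and $E_{i}$ the exceptional divisor over $p_{i}$, the transitive action of $\bar{G}$ on $\{p_{1},\ldots,p_{6}\}$ forces every invariant class to have all $E_{i}$-coefficients equal. Thus $\mbox{Pic}(Z)^{\bar{G}}$ has rank $2$ and is spanned by $H$ and $E\defby E_{1}+\cdots+E_{6}$. The restriction of the intersection pairing is $(aH+bE)^{2}=a^{2}-6b^{2}$, which admits no nonzero rational isotropic vector, so $Z$ supports no $\bar{G}$-equivariant conic bundle structure. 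Consequently $\chi$ must be of Type~II, and $\pi'$ contracts a $\bar{G}$-orbit of pairwise disjoint $(-1)$-curves; since $\mbox{Pic}(Z)^{\bar{G}}$ has rank $2$, there are exactly two such extremal orbits on $Z$.

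One extremal orbit is $\{E_{1},\ldots,E_{6}\}$, contracted by $\pi$ itself. To exhibit the second, I would consider the proper transforms $Q_{i}$ on $Z$ of the plane conics through $\{p_{1},\ldots,p_{6}\}\setminus\{p_{i}\}$: they have class $2H-E+E_{i}$, and using general position of the $p_{i}$ one verifies that each $Q_{i}$ is a $(-1)$-curve with $Q_{i}\cdot Q_{j}=0$ for $i\neq j$. The bijection $p_{i}\leftrightarrow Q_{i}$ is manifestly $\bar{G}$-equivariant, so $\{Q_{1},\ldots,Q_{6}\}$ is a single $\bar{G}$-orbit of six pairwise disjoint $(-1)$-curves, and the class $\sum Q_{i}=12H-5E$ is linearly independent of $E$ in $\mbox{Pic}(Z)^{\bar{G}}\otimes\mathbb{Q}$. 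Hence the $Q_{i}$ span the second extremal ray; contracting them yields a smooth surface of $\bar{G}$-invariant Picard rank $1$ and anticanonical degree $9$, namely $\proj{2}$.

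Combining these observations, $S'=\proj{2}$ in every case. The part I expect to require most care is the equivariant Mori-cone step that promotes "$\mbox{Pic}(Z)^{\bar{G}}$ has rank $2$" into "exactly two $\bar{G}$-extremal rays, both contractible"; once the absence of an invariant isotropic class excludes Type~I links, identifying the two orbits $\{E_{i}\}$ and $\{Q_{i}\}$ as explicit generators of the $\bar{G}$-invariant effective cone suffices.
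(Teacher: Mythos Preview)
Your proposal is correct and follows essentially the same route as the paper: blow up one of the two $\bar{G}$-orbits of size $6$ to obtain a cubic surface $Z$, then observe that the only other $\bar{G}$-equivariant contraction of $Z$ blows down the six proper transforms of the conics through five of the $p_i$, landing back on $\proj{2}$. The one place where you are more thorough than the paper is in explicitly excluding a Type~I link via the computation that $(aH+bE)^2=a^2-6b^2$ has no nonzero rational isotropic vector; the paper simply asserts that $\pi'$ is a blowdown of a $\bar{G}$-orbit of $(-1)$-curves without justifying the absence of a conic bundle structure, so your version is in fact cleaner on this point.
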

 \begin{proof}
  By Lemma~\ref{lemma:sl3:primitive:conclusion}, the action of $\bar{G}$ on $S$ has two orbits of size $6$ and no other orbits of size at most $8$. Therefore, $Z$ is the blowup of one of the size $6$ orbits. It is possible to see that here $Z$ is the Fermat cubic surface. Call the orbit's points $p_1,\ldots,p_6\in S$. The map \deffunname{\pi'}{Z}{S'} must be the blowdown of a $\bar{G}$-orbit of $-1$-curves on $Z$. One can easily see that it must be the blowdown of the curves $\tilde{C}_i$, the strict transforms (under $\pi$) of the conics $C_i\subset S$, each passing through exactly $5$ of the $p_i$.
  Therefore, $S'=\proj{2}$.
 \end{proof}

 \begin{prop}[for a different proof, see~\cite{CheltsovShramov16}]\label{prop:link:A5}
  Let $G$ be the group isomorphic to $\altgroup{5}$ acting on $\complex^3$ primitively. Let $\chi:S=\proj{2}\dashrightarrow S'$ be a non-trivial $\bar{G}$-link. Then $\chi$ is an elementary link of Type~II and $S'=\proj{2}$.
 \end{prop}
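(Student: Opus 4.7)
My approach is to analyze the middle surface $Z$ of the link via $\bar{G}$-equivariant Mori theory on a cubic surface. By Lemma~\ref{lemma:sl3:primitive:conclusion}, the unique $\bar{G}$-orbit of size at most $8$ on $\proj{2}$ is the orbit $\{p_1,\ldots,p_6\}$ of Proposition~\ref{prop:sl3:A5:actionProperties}, whose points lie in general position. Thus $\pi:Z\to\proj{2}$ must be the blowup of these six points, so $Z$ is a smooth Del~Pezzo cubic surface and $\mbox{Pic}^{\bar{G}}(Z)$ has rank $2$, spanned by $H=\pi^*\mathcal{O}_{\proj{2}}(1)$ and $E=\sum_{i=1}^6 E_i$.

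The key step is classifying the $\bar{G}$-orbits of $(-1)$-curves on $Z$. The $27$ lines of the cubic split into three sets: the six exceptional divisors $E_i$; the $15$ strict transforms $\tilde{L}_{ij}$ of the lines $L_{ij}$, which form a single $\bar{G}$-orbit by Proposition~\ref{prop:sl3:A5:actionProperties}; and the six strict transforms $\tilde{C}_i$ of the conics through five of the six points, permuted by $\bar{G}$ in the same way as the $p_i$. A short computation with $\tilde{C}_i=2H-E+E_i$ gives $\tilde{C}_i\cdot\tilde{C}_j=0$ for $i\neq j$, so the $\tilde{C}_i$ form a $\bar{G}$-orbit of six pairwise disjoint $(-1)$-curves. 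Then I would compute the self-intersections of the three candidate invariant classes, obtaining $E^2=-6$, $\left(\sum\tilde{C}_i\right)^2=-6$ and $\left(\sum\tilde{L}_{ij}\right)^2=75$. The two classes with negative self-intersection span the extremal rays of $\overline{NE}(Z)^{\bar{G}}$, so both extremal contractions of $Z$ are divisorial: $\pi$ itself and the blowdown of the six $\tilde{C}_i$.

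In particular, $Z$ admits no $\bar{G}$-equivariant conic bundle structure. This rules out links of Types~I, III and~IV, each of which would require such a fibration on one of $S$, $S'$ or $Z$. Therefore $\chi$ is of Type~II, and since it is non-trivial $\pi':Z\to S'$ must contract the orbit $\{\tilde{C}_i\}$ rather than $\{E_i\}$. The result is a smooth $\bar{G}$-minimal surface $S'$ with $\mbox{Pic}^{\bar{G}}(S')\cong\integers$ and $K_{S'}^2=3+6=9$, so $S'=\proj{2}$. The main subtlety I expect is ruling out an equivariant conic bundle structure on $Z$; the self-intersection computation handles this by forcing both extremal rays of $\overline{NE}(Z)^{\bar{G}}$ to come from divisorial contractions.
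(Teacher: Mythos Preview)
Your proof is correct and follows essentially the same approach as the paper's. Both arguments identify $Z$ as the blowup of $\proj{2}$ in the unique size-$6$ orbit (the paper notes it is the Clebsch cubic), observe that the only $\bar{G}$-orbit of pairwise disjoint $(-1)$-curves other than $\{E_i\}$ is $\{\tilde{C}_i\}$, and conclude that the second extremal contraction of $Z$ lands back on $\proj{2}$. The paper simply asserts that ``one can easily see'' $\pi'$ must blow down the $\tilde{C}_i$, whereas you make this explicit by computing the self-intersections of the three orbit classes and showing that $\sum\tilde{L}_{ij}$ lies in the interior of the invariant Mori cone, thereby also explicitly excluding a $\bar{G}$-conic bundle structure on $Z$ (and hence Type~I links); this is the content the paper leaves implicit.
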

 \begin{proof}
  The group \altgroup{5} has exactly two irreducible $3$-dimensional complex representations, both of them producing the same subgroup of \slgroup{3}{\complex} (up to an outer automorphism). The action of this subgroup is primitive. By Proposition~\ref{prop:sl3:A5}, the action of $\bar{G}$ has exactly one orbit of size at most $8$. The orbit has size $6$, call its points $p_1,\ldots,p_6$. By Proposition~\ref{prop:sl3:A5:actionProperties}, these points are in general position, so the surface $Z$ in the description of the link must be the blowup of $\proj{2}$ in the six points $p_1,\ldots,p_6$. In fact, it can be seen that $Z$ is the Clebsch cubic surface. The map \deffunname{\pi'}{Z}{S'} must be the blowdown of a $\bar{G}$-orbit of $-1$-curves on $Z$. One can easily see that it must be the blowdown of the curves $\tilde{C}_i$, the strict transforms (under $\pi$) of the conics $C_i\subset S$, each passing through exactly $5$ of the $p_i$. Thus, $S'=\proj{2}$ and $\chi$ is a Type~II link.
 \end{proof}

 \begin{proof}[Proof of Theorem~\ref{thm:main}]\label{lemma:main}
  If the action of $G$ is not irreducible, then $S$ is not $\bar{G}$-birationally rigid by Proposition~\ref{prop:link:orb1}. Thus, assume the action of $G$ is irreducible.
  
  Assume there exists a $\bar{G}$ equivariant link $\chi:S=\proj{2}\dashrightarrow S'$. In the structure of the link, \defmapname{\pi}{Z}{S} is a $\bar{G}$-equivariant blowup of at most $8$ points on $S$. Therefore, the action of $\bar{G}$ on $S$ has an orbit of size at most $8$, and $\bar{G}$ is one of the groups described in Lemmas~\ref{lemma:sl3:mono:conclusion} and~\ref{lemma:sl3:primitive:conclusion}. If $\bar{G}$ is isomorphic to~$\altgroup{4}$ or~$\symgroup{4}$, then $S$ is not $\bar{G}$-birationally rigid by Proposition~\ref{prop:link:orb4}.
  
  Assume that $\bar{G}$ is not isomorphic to \altgroup{4} or \symgroup{4}. Then $S'=\proj{2}$ by Propostions~\ref{prop:link:dist_orb}, \ref{prop:link:orb3}, \ref{prop:link:orb7}, \ref{prop:link:E108} and~\ref{prop:link:A5}. Thus $S$ is $\bar{G}$-birationally rigid.
 \end{proof}


\end{document}